\documentclass[11pt,leqno]{amsart}
\usepackage{verbatim,amssymb,amsfonts,latexsym,amsmath,amsthm,tikz,xspace,pgfplots,mathtools}


\newtheorem{theorem}{Theorem}[section]
\newtheorem{lemma}[theorem]{Lemma}
\newtheorem{corollary}[theorem]{Corollary}

\newtheorem{question}[theorem]{Question}

\newtheorem*{theorem*}{Theorem}
\newtheorem*{corollary*}{Corollary}

\newtheorem*{claim}{Claim}
\newtheorem*{sub-claim}{sub-claim}
\theoremstyle{definition}

\newtheorem{definition}[theorem]{Definition}

\theoremstyle{remark}

\newtheorem*{definition*}{Definition}



\newcommand{\N}{\mathbb{N}}

\newcommand{\A}{\mathcal{A}}
\newcommand{\B}{\mathcal{B}}
\newcommand{\C}{\mathcal{C}}

\newcommand{\explicitSet}[1]{\left\lbrace #1 \right\rbrace}
\newcommand{\brackets}[1]{\left\langle #1 \right\rangle}
\newcommand{\set}[2]{\explicitSet{#1 \colon #2}}
\newcommand{\seq}[2]{\brackets{#1 \colon #2}}
\newcommand{\<}{\langle}
\renewcommand{\>}{\rangle}
\renewcommand{\a}{\alpha}
\renewcommand{\b}{\beta}
\newcommand{\g}{\gamma}
\newcommand{\dlt}{\delta}

\newcommand{\z}{\zeta}
\renewcommand{\k}{\kappa}
\newcommand{\s}{\sigma}

\newcommand{\w}{\omega}
\newcommand{\0}{\emptyset}
\newcommand{\sub}{\subseteq}
\newcommand{\rest}{\!\restriction\!}









\newcommand{\card}[1]{\left\lvert #1 \right\rvert}
\newcommand{\PP}{\mathbb{P}}

\newcommand{\HH}{\mathbb{H}}
\newcommand{\forces}{\Vdash}

\newcommand{\1}{\mathbf{1}}

\newcommand{\cohen}[1]{\mathrm{Fn}(#1,2)}
\newcommand{\pwmf}{\mathcal{P}(\w)/\mathrm{fin}}

\renewcommand{\AA}{\mathbb{A}}
\newcommand{\BB}{\mathbb{B}}
\newcommand{\DD}{\mathbb{D}}

\newcommand{\bdd}{\mathfrak b}

\newcommand{\ch}{\ensuremath{\mathsf{CH}}\xspace}
\newcommand{\zfc}{\ensuremath{\mathsf{ZFC}}\xspace}

\newcommand{\ma}{\ensuremath{\mathsf{MA}}\xspace}

\newcommand{\gch}{\ensuremath{\mathsf{GCH}}\xspace}


\newcommand{\axiom}{\raisebox{.5mm}{$\bigtriangledown$}\xspace}
\newcommand{\chang}{$(\aleph_{\omega+1},\aleph_\omega) \hspace{-.5mm} \twoheadrightarrow \hspace{-.5mm} (\aleph_1,\aleph_0)$\xspace}

\newcommand{\FF}{\mathbb{F}}
\newcommand{\EE}{\mathbb{E}}
\newcommand{\rel}{\trianglelefteq}

\begin{document}

\title[The independence of $\mathsf{GCH}$ and \raisebox{.5mm}{$\bigtriangledown$}]{The independence of $\mathsf{GCH}$ and a combinatorial principle related to Banach-Mazur games}
\author{Will Brian}
\address {
Will Brian\\
Department of Mathematics and Statistics\\
9201 University City Blvd.\\
Charlotte, NC 28223}
\email{wbrian.math@gmail.com}
\urladdr{wrbrian.wordpress.com}
\author{Alan Dow}
\address {
Alan Dow\\
Department of Mathematics and Statistics\\
University of North Carolina at Charlotte\\
Charlotte, NC 28223}
\email{adow@uncc.edu}
\author{Saharon Shelah}
\address {
Saharon Shelah\\
Einstein Institute of Mathematics\\
The Hebrew University of Jerusalem\\
Jerusalem, 91904, Israel\\
and Department of Mathematics\\
Rutgers University\\
New Brunswick, NJ 08854, USA}
\email{shlhetal@math.huji.ac.il}
\urladdr{http://shelah.logic.at}

\subjclass[2010]{03E05, 03E35, 03E65, 28A60}
\keywords{Cohen forcing, Chang's conjecture, measure algebra, $\square$, \raisebox{.5mm}{$\bigtriangledown$}}

\begin{abstract}
It was proved recently that Telg\'arsky's conjecture, which concerns partial information strategies in the Banach-Mazur game, fails in models of $\mathsf{GCH}+\square$. The proof introduces a combinatorial principle that is shown to follow from $\mathsf{GCH}+\square$, namely:
\begin{itemize}
\item[\raisebox{.5mm}{$\bigtriangledown$}:] Every separative poset $\mathbb P$ with the $\kappa$-cc contains a dense sub-poset $\mathbb D$ such that $|\{ q \in \mathbb D \,:\, p \text{ extends } q \}| < \kappa$ for every $p \in \mathbb P$. 
\end{itemize}
We prove this principle is independent of $\mathsf{GCH}$ and $\mathsf{CH}$, in the sense that \raisebox{.5mm}{$\bigtriangledown$} does not imply $\mathsf{CH}$, and $\mathsf{GCH}$ does not imply \raisebox{.5mm}{$\bigtriangledown$} assuming the consistency of a huge cardinal.

We also consider the more specific question of whether \raisebox{.5mm}{$\bigtriangledown$} holds with $\mathbb P$ equal to the weight-$\aleph_\omega$ measure algebra. We prove, again assuming the consistency of a huge cardinal, that the answer to this question is independent of $\mathsf{ZFC}+\mathsf{GCH}$.
\end{abstract}

\maketitle

\section{Introduction}

Telg\'arsky's conjecture states that for each $k \in \N$, there is a topological space $X$ such that the player {\small NONEMPTY} has a winning $(k+1)$-tactic, but no winning $k$-tactic, in the Banach-Mazur game on $X$. Recently, the first two authors, along with David Milovich and Lynne Yengulalp, proved that it is consistent for this conjecture to fail \cite{BDMY}. The proof introduces the following combinatorial principle, which implies the failure of Telg\'arsky's conjecture:
\begin{itemize}
\item[\raisebox{.5mm}{$\bigtriangledown$}:] Every separative poset $\mathbb P$ with the $\kappa$-cc contains a dense sub-poset $\mathbb D$ such that $|\{ q \in \mathbb D \,:\, p \text{ extends } q \}| < \kappa$ for every $p \in \mathbb P$. 
\end{itemize}
In \cite{BDMY}, the consistency of \axiom is proved from $\gch+\square$ via the construction of what are called $\k$-sage Davies trees, which are defined in Section 2 below. The existence of arbitrarily long $\k$-sage Davies trees implies \axiom holds for $\k$-cc posets.
It is also proved in \cite{BDMY} that \axiom implies $\bdd = \aleph_1$, or more generally that \axiom implies there is no decreasing sequence of length $\w_2$ in $\pwmf$. Therefore \axiom is independent of \zfc. 

But this raises the question of the relationship between \axiom and \gch, specifically whether either of these statements implies the other.
The purpose of this paper is to answer this question in the negative by showing that $\mathsf{GCH}$ does not imply \axiom, and \axiom does not imply $\mathsf{CH}$.

In Section~\ref{sec:Cohen}, we prove that when Cohen reals are added by forcing, the existence of arbitrarily long $\k$-sage Davies trees in the ground model suffices to guarantee that \axiom holds for $\k$-cc posets in the extension. 
Thus adding Cohen reals to a model of $\gch+\square$ produces a model of $\axiom+\neg\ch$.

On the other hand, we show in Section 3 that the Chang conjecture \chang implies that \axiom fails. This is done by directly constructing a ccc poset $\PP$ (a modified product of $\aleph_\w$ Hechler forcings) and then using \chang to show it violates \axiom. As $\gch + $\chang is consistent relative to a huge cardinal \cite{EH}, this shows that \gch does not imply \axiom unless huge cardinals are inconsistent. 
We note that finding a model of $\gch+\neg\axiom$ requires large cardinals. In fact, the proof of the consistency of \axiom in \cite{BDMY} only uses $\gch+\square$-for-singulars, and the consistency of $\gch$ plus the failure of $\square$ at any singular cardinal is known to have significant large cardinal strength \cite{CF}. 

In Section 4 we consider the more specific question of whether \raisebox{.5mm}{$\bigtriangledown$} holds with $\mathbb P$ equal to the weight-$\aleph_\omega$ measure algebra. We prove that the answer to this question is also independent of $\mathsf{ZFC}+\mathsf{GCH}$.
Once again Chang's conjecture for $\aleph_\w$ comes into the proof, and so the result is established modulo the consistency of a huge cardinal.

\section{\axiom does not imply \ch}\label{sec:Cohen}

A Davies tree is a sequence $\seq{M_\a}{\a < \nu}$ of elementary submodels of some large fragment $H_\theta$ of the set-theoretic universe such that the $M_\a$ enjoy certain coherence and covering properties. (These sequences are called ``trees'' because they are usually constructed by enumerating the leaves of a tree of elementary submodels of $H_\theta$.) These structures provide a unified framework for carrying out a wide variety of constructions in infinite combinatorics. They were introduced by R. O. Davies in \cite{Davies}, and an excellent survey of their many uses can be found in Daniel and Lajos Soukup's paper \cite{Soukups}.

Also in \cite{Soukups}, the Soukups construct a countably closed version of a Davies tree called a ``sage Davies tree'' using $\gch +\square$. These structures were generalized in \cite{BDMY} by constructing $<\!\k$-closed versions of these trees for uncountable $\k$, called $\k$-sage Davies trees. 
Roughly, $\k$-sage Davies trees of length $\nu$ allow us to take an object of size $\nu$ with ``critical substructures'' of size $<\!\k$ (such as a $\nu$-sized poset with the $\k$-cc), and to approximate the large object (size $\nu$) with a sequence of smaller ones (size $\k$).
It was proved in \cite{BDMY} that $\gch+\square$ implies the existence of arbitrarily long $\k$-sage Davies trees for every regular cardinal $\k$.

In this section, we show that if we begin with a model of set theory containing arbitrarily long $\k$-sage Davies trees, then, after adding any number of Cohen reals by forcing, \axiom holds in the extension for separative $\k$-cc posets. It follows that \axiom is consistent with any permissible value of $2^{\aleph_0}$.

Given a poset $\PP$, recall that the \emph{Souslin number} of $\PP$, denoted $S(\PP)$, is the minimum value of $\k$ such that $\PP$ has no antichains of size $\k$. 
Erd\H{o}s and Tarski proved in \cite{ET} that $S(\PP)$ is a regular cardinal for every poset $\PP$.

For every poset $\PP$, let $\axiom(\PP)$ denote the statement that \axiom holds for $\PP$, i.e., that there is a dense sub-poset $\DD$ of $\PP$ with $\card{\set{d \in \DD}{p \text{ extends }d}} < S(\PP)$ for every $p \in \PP$.

In what follows, $H_\theta$ denotes the set of all sets hereditarily smaller than some very big cardinal $\theta$.
Given two sets $M$ and $N$, we write $M \prec N$ to mean that $(M,\in)$ is an elementary submodel of $(N,\in)$.
A set $M$ is called \emph{$<\!\k$-closed} if $M^{<\k} \sub M$. If $M$ satisfies (enough of) \zfc, this is equivalent to the property $[M]^{<\k} \sub M$. 

\begin{definition}\label{def:tree}
Let $\k,\nu$ be infinite cardinals and let $p$ be some set. A \emph{$\k$-sage Davies tree for $\nu$ over $p$} is a sequence $\seq{M_\a}{\a < \nu}$ of elementary submodels of $(H_\theta,\in)$, for some ``big enough'' regular cardinal $\theta$, such that
\begin{enumerate}
\item $p \in M_\a$, $M_\a$ is $<\!\k$-closed, and $\card{M_\a} = \k$ for all $\a < \nu$.
\item $[\nu]^{<\k} \sub \bigcup_{\a < \nu}M_\a$.
\item For each $\a < \nu$, there is a set $\mathcal N_\a$ of elementary submodels of $H_\theta$ such that $\card{\mathcal N_\a} < \k$, each $N \in \mathcal N_\a$ is $<\!\k$-closed and contains $p$, and
$$\textstyle \bigcup_{\xi < \a}M_\xi \,=\, \bigcup \mathcal N_\a.$$
\item $\seq{M_\xi}{\xi < \a} \in M_\a$ for each $\a < \nu$.
\item $\bigcup_{\a < \nu} M_\a$ is a $<\!\k$-closed elementary submodel of $H_\theta$.
\end{enumerate}
\end{definition}

The following fact is proved in \cite[Theorem 3.20]{BDMY}:

\begin{theorem}\label{thm:Davies}
Assume $\gch+\square$. Let $\k,\nu$ be infinite regular cardinals with $\k < \nu$. For any set $p$, there is a $\k$-sage Davies tree for $\nu$ over $p$.
\end{theorem}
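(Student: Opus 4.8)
The plan is to prove the theorem by induction on the regular cardinal $\nu$, using a fixed $\square_\lambda$-sequence at each singular $\lambda < \nu$ to handle the only genuinely hard stages, and using \gch merely to control cardinal arithmetic. Concretely, \gch gives $\k^{<\k} = \k$, so that the $<\!\k$-closure of any set of size $\k$ is again a $<\!\k$-closed elementary submodel of size $\k$; it gives $\lambda^{<\k} = \lambda$ whenever $\cf(\lambda) \geq \k$, so that $\card{[\nu]^{<\k}} = \nu$ and condition (2) only asks us to absorb $\nu$ many objects. Conditions (1), (4) and (5) are then essentially bookkeeping: (1) by always passing to a $<\!\k$-closure, (4) by feeding the sequence-so-far into the next model as a single parameter, and (5) because the full union $\bigcup_{\a < \nu} M_\a$ will itself be the union of the completed tree, whose $<\!\k$-closedness is forced by the construction. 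All the difficulty lies in meeting conditions (2) and (3) simultaneously.

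The point to appreciate first is why a naive increasing continuous chain fails. As long as $\card{\a} \leq \k$ one may take $M_\a \supseteq \bigcup_{\xi < \a} M_\xi$ and cover the initial union either by itself (when $\cf(\a)=\k$, since then the union is already $<\!\k$-closed) or by a cofinal subchain of length $\cf(\a) < \k$; no $\square$ is needed, and this is exactly the base case $\nu = \k^+$. But once $\card{\a} > \k$, a size-$\k$ model $M_\a$ can no longer contain all the earlier $M_\xi$ as subsets, so $\bigcup_{\xi < \a} M_\xi$ is a union of $\card{\a}$ many pairwise incomparable size-$\k$ models: it is neither an elementary submodel nor $<\!\k$-closed, and no single model equals it. Covering it by fewer than $\k$ many $<\!\k$-closed models, each of which is a legitimate approximation built from earlier pieces, is the entire content of condition (3).

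The device that makes this work is to realize the $M_\a$ as the leaves of a tree $T$ and to attach to each internal node $s$ the model $N_s = \bigcup\{M_\a : \a \text{ a leaf below } s\}$, the union of the completed subtree at $s$. Each $N_s$ contains $p$ and lies inside $\bigcup_{\a<\nu}M_\a$, and applying condition (5) recursively to the (smaller) subtree at $s$ shows $N_s$ is a $<\!\k$-closed elementary submodel. The covering family $\mathcal N_\a$ is then read off the tree: take the nodes on the branch leading to the $\a$-th leaf together with their already-completed left siblings, so that $\bigcup_{\xi < \a} M_\xi = \bigcup\{N_s : s \in F_\a\}$ for a collection $F_\a$ of nodes. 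Everything thus reduces to building $T$ so that $\card{F_\a} < \k$ for every $\a$, i.e. so that along every branch the accumulated left-siblings number fewer than $\k$. When the branching at a node occurs at a regular cardinal $\theta \in (\k,\nu)$ this is arranged by organizing the $\theta$ children themselves according to a $\k$-sage Davies tree for $\theta$, supplied by the induction hypothesis, so that initial segments of the children are already covered by $<\!\k$ completed blocks.

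The hard part, and the sole place $\square$ enters, is a node whose branching occurs at a singular cardinal $\lambda < \nu$: there the induction hypothesis says nothing, and a cofinal family of $\cf(\lambda)$ blocks need not cover initial segments coherently or in fewer than $\k$ pieces. Here I would fix a $\square_\lambda$-sequence $\seq{C_\delta}{\delta \in \mathrm{Lim}(\lambda)}$ with each $C_\delta$ club in $\delta$, $\mathrm{otp}(C_\delta) < \lambda$, and coherence $C_{\delta'} = C_\delta \cap \delta'$ whenever $\delta'$ is a limit point of $C_\delta$, and thread the construction along the $C_\delta$. The coherence is precisely what tames the singular cofinality for bookkeeping: at stage $\delta$ the part below $\lambda$ is decomposed along $C_\delta$ into blocks of strictly smaller order type, each handled by data already built for a cardinal $< \lambda$, and coherence forces the decompositions at different stages to agree on overlaps, so that they assemble into fewer than $\k$ completed models $N_s$ rather than $\cf(\lambda)$ many incompatible ones. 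Verifying this coherent bound $\card{F_\a} < \k$ across a singular level is the technical heart of the argument; granting it, assembling the global tree and reading conditions (1)--(5) off the leaves and internal nodes is routine.
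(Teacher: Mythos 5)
The first thing to note is that the paper does not prove Theorem~\ref{thm:Davies} at all: it quotes it from \cite[Theorem 3.20]{BDMY}, remarking that the proof there actually runs through a weakening of $\square$ in the spirit of the Foreman--Magidor very weak square principle. So your sketch must stand on its own, and it does not. Your global architecture --- induction on $\nu$, a tree of elementary submodels whose leaves are the $M_\alpha$, internal nodes equal to unions of completed subtrees, covering families read off from completed left siblings along a branch --- is indeed the architecture of sage Davies trees. But the proposal both defers and, more importantly, misidentifies the one step that \emph{is} the theorem. Under \gch, if $\lambda$ is singular with $\mathrm{cf}(\lambda) < \kappa \leq \lambda$, then $\lambda^{<\kappa} = \lambda^+$, so \emph{no set of size $\lambda$ is $<\!\kappa$-closed}. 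Consequently your assertion that ``applying condition (5) recursively to the (smaller) subtree at $s$ shows $N_s$ is a $<\!\kappa$-closed elementary submodel'' is false for every node $s$ of size $\lambda$: the union of its completed subtree has size $\lambda$ and therefore cannot be $<\!\kappa$-closed, so condition (5) cannot hold for that subtree, and $N_s$ can never serve as a covering model. This is not a corner case but the central one ($\kappa = \aleph_1$, $\lambda = \aleph_\omega$, $\nu = \aleph_{\omega+1}$): at a node of size $\lambda^+$, all $\lambda^+$ many size-$\lambda$ left siblings are unusable, and the real task is to re-decompose each initial union $\bigcup_{\beta<\delta} N_\beta$, for every $\delta < \lambda^+$ (including $\delta$ of cofinality $\geq \kappa$), into fewer than $\kappa$ many $<\!\kappa$-closed elementary submodels of size $< \lambda$, coherently in $\delta$. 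Your left-sibling mechanism collapses exactly there.

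Second, the $\square$ patch as you describe it cannot repair this. You state $\square_\lambda$ as a coherent sequence $\langle C_\delta : \delta \in \mathrm{Lim}(\lambda)\rangle$ of clubs $C_\delta \subseteq \delta$ with $\mathrm{otp}(C_\delta) < \lambda$; indexed by limit ordinals \emph{below $\lambda$}, this principle is a theorem of \zfc{} (take $C_\delta = \delta$), so it can do no work. The principle $\square_\lambda$ lives on $\lambda^+$ --- its index set is the set of limit ordinals below $\lambda^+$ --- and that is no accident, because the coherence that is needed is across the $\lambda^+$ many initial segments at the successor level, not ``below $\lambda$'' where your stages $\delta$ range. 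Relatedly, your diagnosis that the danger is ``a cofinal family of $\mathrm{cf}(\lambda)$ blocks'' being too many or incoherent is backwards: in the hard case $\mathrm{cf}(\lambda) < \kappa$, so $\mathrm{cf}(\lambda)$ many blocks would be perfectly acceptable; the problem is the $\lambda^+$-indexed family of decompositions. Finally, deferring the bound $\card{F_\alpha} < \kappa$ as ``the technical heart \dots\ granting it, the rest is routine'' concedes the entire content of the theorem: as this paper itself shows in Section 3, \gch{} plus Chang's conjecture for $\aleph_\omega$ refutes \axiom{} and hence refutes the existence of arbitrarily long $\kappa$-sage Davies trees, so the successor-of-singular step is precisely where $\square$ must do irreplaceable work of genuine consistency strength; no bookkeeping argument can close it.
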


In fact, the proof in \cite{BDMY} uses a weak version of $\square$ related to the Very Weak Square principle articulated by Foreman and Magidor in \cite{Foreman&Magidor}. The following fact, which we will use below, is Lemma 3.7 in \cite{BDMY}.

\begin{lemma}
Let $\k,\nu$ be regular cardinals with $\k < \nu$, let $p$ be any set, and let $\seq{M_\a}{\a < \nu}$ be a $\k$-sage Davies tree for $\nu$ over $p$. If $\a < \b < \nu$, then
$$\a \in M_\b \quad \Leftrightarrow \quad M_\a \in M_\b \quad \Leftrightarrow \quad M_\a \sub M_\b.$$
\end{lemma}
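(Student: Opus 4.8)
The plan is to establish all three equivalences as a single cycle of implications,
$$\a \in M_\b \ \Rightarrow\ M_\a \in M_\b \ \Rightarrow\ M_\a \sub M_\b \ \Rightarrow\ \a \in M_\b,$$
after first isolating two structural facts about the models of a $\k$-sage Davies tree. The first is that $\a \in M_\a$ for every $\a < \nu$: clause~(4) puts the sequence $\seq{M_\xi}{\xi < \a}$ into $M_\a$, and since that sequence has domain exactly $\a$ and $M_\a \prec H_\theta$, elementarity gives $\a = \domain(\seq{M_\xi}{\xi < \a}) \in M_\a$. The second is that $\k \sub M_\a$, and in fact $\k \in M_\a$, for every $\a < \nu$: the inclusion follows by induction on $\eta < \k$ from $<\!\k$-closure, since if $\eta \sub M_\a$ then the identity map on $\eta$ is a $<\!\k$-sequence of elements of $M_\a$, hence lies in $M_\a$, and its range $\eta$ then lies in $M_\a$ as well.

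Granting these, two of the three implications reduce to bookkeeping. For $\a \in M_\b \Rightarrow M_\a \in M_\b$ I would apply clause~(4) at $\b$ to place $s = \seq{M_\xi}{\xi < \b}$ in $M_\b$; then $\a \in M_\b$ and $\a \in \domain(s) = \b$, so elementarity gives $s(\a) = M_\a \in M_\b$. For $M_\a \sub M_\b \Rightarrow \a \in M_\b$ I would simply invoke the first preliminary fact: $\a \in M_\a \sub M_\b$. This is precisely the step where that fact earns its keep, collapsing what could otherwise be an awkward direction into a single line.

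The implication carrying the real content, and the one I expect to be the main (if still routine) obstacle, is $M_\a \in M_\b \Rightarrow M_\a \sub M_\b$, since it is the only place where the size and closure of the models are genuinely used. Here $\card{M_\a} = \k$ by clause~(1), so $H_\theta$ contains a surjection $f \colon \k \to M_\a$; as $\k \in M_\b$ and $M_\a \in M_\b$, elementarity produces such an $f$ inside $M_\b$, and then $\k \sub M_\b$ forces $f(\eta) \in M_\b$ for every $\eta < \k$, whence $M_\a = \set{f(\eta)}{\eta < \k} \sub M_\b$. The only subtlety worth flagging is the upgrade from $\k \sub M_\b$ to $\k \in M_\b$, needed so that $\k$ may serve as a parameter; but this too is immediate from elementarity, since $M_\a \in M_\b$ forces the cardinal $\card{M_\a} = \k$ to lie in $M_\b$ as well.
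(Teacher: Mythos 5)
Your proof is correct, and nothing in it needs repair. Be aware, though, that the paper does not prove this lemma at all: it is imported as Lemma 3.7 of \cite{BDMY}, so there is no in-paper argument to compare against, and your write-up stands on its own merits. The cycle $\a \in M_\b \Rightarrow M_\a \in M_\b \Rightarrow M_\a \sub M_\b \Rightarrow \a \in M_\b$ does establish all three equivalences, and each link is sound: the first follows from clause (4) and elementarity (this is exactly the device the paper itself uses when proving that properties (1)--(5) imply the auxiliary property (6)); the second from the surjection $f \colon \k \to M_\a$ obtained inside $M_\b$ by elementarity, together with $\k \sub M_\b$; the third from $\a = \domain(\seq{M_\xi}{\xi < \a}) \in M_\a$. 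One small blemish: in your preliminaries you assert $\k \in M_\a$ for \emph{every} $\a < \nu$, but the argument given there establishes only the inclusion $\k \sub M_\a$; membership of $\k$ in an arbitrary $<\!\k$-closed elementary submodel of size $\k$ is not automatic (it is immediate when $\k$ is a successor cardinal, since then $\k = \lambda^+$ with $\lambda \in M_\a$, but it is not obvious for, say, $M_0$ when $\k$ is weakly inaccessible and not definable in $H_\theta$ from parameters available in $M_0$). This overclaim costs you nothing, because the only place membership is actually used is inside the second implication, where you correctly re-derive $\k = \card{M_\a} \in M_\b$ by elementarity from the hypothesis $M_\a \in M_\b$.
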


In addition to the five properties listed above that define a $\k$-sage Davies tree, it will be convenient here to have trees with one additional property:
\begin{enumerate}
\item[(6)] For every $\a < \nu$, there is a well ordering $\sqsubset_\a$ of $M_\a$ with order type $\k$ such that if $\a < \b < \mu$ and $\a \in M_\b$, then $\sqsubset_\a\, \in M_\b.$
\end{enumerate}
It turns out that this property of $\k$-sage Davies trees is already a consequence of properties $(1)$ through $(5)$.

\begin{lemma}\label{lem:property6}
Let $\k,\nu$ be regular cardinals with $\k < \nu$ and let $p$ be some set. Every $\k$-sage Davies tree for $\nu$ over $p$ satisfies property $(6)$.
\end{lemma}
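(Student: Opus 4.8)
The plan is to make each well-ordering $\sqsubset_\a$ a \emph{definable function of the single parameter $M_\a$}, so that the coherence demand in property $(6)$ becomes an immediate consequence of elementarity together with the preceding lemma. Concretely, I will exhibit a formula $\psi(w,M)$ (with no other parameters) such that, for every $<\!\k$-closed $M\prec(H_\theta,\in)$ of size $\k$, there is a unique $w$ with $(H_\theta,\in)\models\psi(w,M)$, and this $w$ is a well-ordering of $M$ of order type $\card{M}$. Granting this, set $\sqsubset_\a$ to be the $w$ associated with $M=M_\a$; its order type is $\card{M_\a}=\k$ by property $(1)$. For the coherence clause, suppose $\a<\b<\nu$ and $\a\in M_\b$. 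By the lemma recalled above (Lemma 3.7 of \cite{BDMY}) this gives $M_\a\in M_\b$, and since $M_\b\prec(H_\theta,\in)$ and $\sqsubset_\a$ is the unique solution of $\psi(\,\cdot\,,M_\a)$ in $H_\theta$, elementarity produces a witness $w_0\in M_\b$ to $\psi(w_0,M_\a)$; as $w_0,M_\a\in M_\b\prec H_\theta$ we get $H_\theta\models\psi(w_0,M_\a)$, whence $w_0=\sqsubset_\a$ and therefore $\sqsubset_\a\in M_\b$. Thus everything reduces to producing the formula $\psi$.

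The main obstacle is that the models are only assumed to be elementary in $(H_\theta,\in)$, with no predicate for a well-ordering of $H_\theta$, so I cannot simply take ``the $<^{*}$-least well-ordering of $M_\a$'' for a fixed global well-ordering $<^{*}$ of $H_\theta$. What saves the day is that \emph{transitive} sets carry a canonical, parameter-free well-ordering. First recover $\k$ internally as $\k=\card{M}$. Next let $\pi\colon M\to\bar M$ be the Mostowski collapse of $M$ (definable from $M$, with $\bar M$ transitive of size $\k$), and define on $\bar M$ the Ackermann-style well-ordering $\prec$ by $\in$-recursion on rank: $x\prec y$ iff $\rk(x)<\rk(y)$, or $\rk(x)=\rk(y)$ and the $\prec$-least element of the symmetric difference $x\triangle y$ belongs to $y$. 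This is a genuine well-ordering of $\bar M$, definable from $\bar M$ alone; pulling it back along $\pi$ yields a canonical well-ordering $\prec_M$ of $M$. Now $\prec_M$ induces, in the usual way (compare by the $\prec_M$-least element of a symmetric difference), a canonical well-ordering of $\mathcal{P}(M\times M)$, and we let $\psi(w,M)$ assert that $w$ is the least element, in this induced well-ordering, of the set (nonempty, since $\card{M}=\k$) of those $R\subseteq M\times M$ that well-order $M$ in order type $\card{M}$. Every clause here is expressible in $(H_\theta,\in)$ using $M$ as the only parameter, and $\mathcal{P}(M\times M)\in H_\theta$ because $\theta$ is chosen large enough that $2^{\k}<\theta$; this is the desired $\psi$, and it defines a well-ordering of $M$ of order type $\card{M}$.

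Combining the two paragraphs completes the proof: $\sqsubset_\a$ is a well-ordering of $M_\a$ of order type $\k$, and the coherence clause of $(6)$ holds. The one point demanding care is the passage to order type \emph{exactly} $\k$ rather than merely some order type of cardinality $\k$ (the Ackermann ordering $\prec_M$ itself in general has order type some ordinal in $[\k,\k^{+})$ that may exceed $\k$, since $\bar M$ typically contains ordinals above $\k$); this is handled cleanly above by using $\prec_M$ only to make a canonical \emph{choice} among the well-orderings of the correct order type, rather than taking $\prec_M$ itself as $\sqsubset_\a$.
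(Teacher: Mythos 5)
Your opening reduction is sound as far as it goes: \emph{if} there were a formula $\psi(w,M)$, with $M$ as its only parameter, whose unique solution in $(H_\theta,\in)$ is a well-ordering of $M$ of type $|M|$, then the coherence clause of property $(6)$ would follow from $M_\alpha\in M_\beta$ (via the quoted Lemma 3.7) by the standard elementarity argument you give. The proof breaks, however, exactly where you build $\psi$. The Ackermann-style relation $\prec$ is \emph{not} a well-ordering of an infinite transitive set. Concretely, the sets $\omega\setminus n$ for $n\in\omega$ all lie in $\bar M$ (each is definable in $H_\theta$, hence lies in $M$, and is fixed by the collapse), all have rank exactly $\omega$, and for $n<m$ the $\prec$-least element of $(\omega\setminus n)\,\triangle\,(\omega\setminus m)$ is $n$, which belongs to $\omega\setminus n$; so $\omega\setminus 0\succ\omega\setminus 1\succ\omega\setminus 2\succ\cdots$ is an infinite descending chain. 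Worse, the recursion is not even total at rank $\omega+1$: the sets $x=\{\omega\setminus 2n : n\in\omega\}$ and $y=\{\omega\setminus(2n+1):n\in\omega\}$ have symmetric difference $\{\omega\setminus n : n\in\omega\}$, which has no $\prec$-least element, so the comparison rule gives no verdict. The Ackermann ordering is a well-ordering only on $V_\omega$, where every rank level is finite. The same defect recurs one level up: even granting a genuine well-ordering $\prec_M$ of $M$, comparing subsets of $M\times M$ by the least element of the symmetric difference yields a linear order that is not well-founded (if $c_0\prec' c_1\prec'\cdots$ are pairs, then $\{c_0\}\succ\{c_1\}\succ\cdots$), so ``the least $R$ that well-orders $M$ in type $|M|$'' need not exist. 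Hence no $\sqsubset_\alpha$ is ever produced, and the proof has a genuine gap.

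Moreover, this is not a repairable detail but a defect of the strategy: your key claim that ``transitive sets carry a canonical, parameter-free well-ordering'' is not a theorem of \zfc. For instance, in the extension of $L$ by many Cohen reals, $\mathcal{P}(\omega)$ is a transitive, definable set, so any well-ordering of it definable from it would be ordinal-definable, yet no ordinal-definable well-ordering of the reals exists there. Uniform definable selection of well-orderings is available under $V=L$, or whenever $(H_\theta,\in)$ carries a definable well-ordering, but not in general. The paper's proof sidesteps canonicity entirely, using property $(4)$ of the tree, which your argument never invokes: since $\seq{M_\xi}{\xi<\alpha+1}\in M_{\alpha+1}$, we get $M_\alpha\in M_{\alpha+1}$, so by elementarity $M_{\alpha+1}$ contains \emph{some} well-ordering of $M_\alpha$ of type $\kappa$, and one fixes any such $\sqsubset_\alpha\,\in M_{\alpha+1}$. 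Coherence then comes not from definability but from containment: $\alpha\in M_\beta$ implies $\alpha+1\in M_\beta$, whence $M_{\alpha+1}\subseteq M_\beta$ by Lemma 3.7, and so $\sqsubset_\alpha\,\in M_\beta$. Replacing your second paragraph by this use of property $(4)$ yields a correct proof.
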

\begin{proof}
First observe that if $\a < \nu$ then $M_\a \in M_{\a+1}$. This is because $\seq{M_\xi}{\xi < \a+1} \in M_{\a+1}$ by definition, and this implies $M_\a \in M_{\a+1}$ because $M_\a$ is definable from $\seq{M_\xi}{\xi < \a + 1}$.

Because $|M_\a| = \k$, there is (in $H_\theta$) a well ordering of $M_\a$ with order type $\k$. By elementarity, there is some such well ordering of $M_\a$ in $M_{\a+1}$. For each $\a < \mu$, fix a well ordering $\sqsubset_\a$ of $M_\a$ with order type $\k$ such that $\sqsubset_\a\, \in M_{\a+1}$.
If $\a < \b < \nu$ and $\a \in M_\b$, then $\a+1 \in M_\b$ and therefore $M_{\a+1} \sub M_\b$ by the previous lemma. In particular, $\sqsubset_\a\, \in M_\b$.
\end{proof}

It will be convenient to work with complete Boolean algebras rather than arbitrary posets when proving \axiom holds in Cohen extensions. This restriction is justified by the following lemma.

\begin{lemma}
\axiom holds if and only if it holds for every poset of the form $\PP = \BB \setminus \{\mathbf{0}\}$, where $\BB$ is a complete Boolean algebra.
\end{lemma}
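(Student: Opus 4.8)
The forward direction is immediate. For any complete Boolean algebra $\BB$, the poset $\PP = \BB \setminus \{\mathbf 0\}$ is separative: whenever $a \not\leq b$ in $\BB$ we have $a \wedge \neg b \neq \mathbf 0$, and $a \wedge \neg b \leq a$ is incompatible with $b$. So if $\axiom$ holds for all separative posets, it holds in particular for every poset of the form $\BB \setminus \{\mathbf 0\}$. Throughout the converse I write $p \leq q$ for ``$p$ extends $q$,'' so that in $\BB \setminus \{\mathbf 0\}$ this is just the Boolean order (stronger being smaller).

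For the reverse direction, let $\PP$ be an arbitrary separative poset; the plan is to deduce $\axiom(\PP)$ from the assumed instance for $\BB \setminus \{\mathbf 0\}$ where $\BB = \mathrm{RO}(\PP)$ is the Boolean completion. The key tool is that every separative poset densely embeds into its completion via a map $e$ that is an order-isomorphism onto its image and preserves incompatibility, with $e[\PP]$ dense in $\BB \setminus \{\mathbf 0\}$. Since $e$ is an order-embedding onto a dense set, I may simply identify $\PP$ with a dense sub-poset of $\BB \setminus \{\mathbf 0\}$, so that the ``extends'' relation on $\PP$ agrees with $\leq$ restricted from $\BB$.

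First I would check that the Souslin number is unchanged, i.e.\ $S(\PP) = S(\BB \setminus \{\mathbf 0\}) =: \k$. One direction is trivial, as every antichain of $\PP$ is an antichain of $\BB$. Conversely, given an antichain $A \sub \BB \setminus \{\mathbf 0\}$, density of $\PP$ lets me choose for each $a \in A$ some $p_a \in \PP$ with $p_a \leq a$; these $p_a$ are pairwise incompatible and hence distinct, giving an antichain of $\PP$ of the same cardinality as $A$. Thus $\PP$ and $\BB \setminus \{\mathbf 0\}$ admit antichains of exactly the same cardinalities, so their Souslin numbers coincide. Now I apply $\axiom(\BB \setminus \{\mathbf 0\})$ to obtain a dense sub-poset $\DD' \sub \BB \setminus \{\mathbf 0\}$ with $\card{\set{d \in \DD'}{b \leq d}} < \k$ for every $b \in \BB \setminus \{\mathbf 0\}$. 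To push this witness down into $\PP$, I use density of $\PP$ to fix for each $d \in \DD'$ an element $f(d) \in \PP$ with $f(d) \leq d$, and set $\DD = \set{f(d)}{d \in \DD'}$. Density of $\DD$ in $\PP$ follows from that of $\DD'$: given $p \in \PP$, pick $d \in \DD'$ with $d \leq p$, so $f(d) \leq d \leq p$ with $f(d) \in \DD$. For the up-set bound, note that if $q \leq f(d)$ then $q \leq d$ (since $f(d) \leq d$), whence $\set{x \in \DD}{q \leq x} \sub f\big[\set{d \in \DD'}{q \leq d}\big]$, and the right-hand side has cardinality less than $\k = S(\PP)$. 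This yields $\axiom(\PP)$.

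The step I expect to require the most care is confirming that the Souslin numbers agree and that the refinement $f$ genuinely controls the up-sets; both rest on separativity of $\PP$, which is precisely what makes $e$ a dense, incompatibility-preserving embedding, together with the containment $\set{x \in \DD}{q \leq x} \sub f\big[\set{d \in \DD'}{q \leq d}\big]$ coming from $f(d) \leq d$. Everything else is routine bookkeeping with the dense embedding.
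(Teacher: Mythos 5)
Your proof is correct and follows essentially the same route as the paper, which simply cites \cite[Lemma 2.10]{BDMY} and sketches exactly this argument: the ``only if'' direction via separativity of $\BB \setminus \{\mathbf 0\}$, and the ``if'' direction by showing that $\axiom(\PP)$ for separative $\PP$ follows from $\axiom$ of its Boolean completion. Your write-up merely fills in the details the paper delegates to the reference, namely that the dense embedding preserves the Souslin number and that the witnessing dense set can be pulled down from $\mathrm{RO}(\PP)$ into $\PP$.
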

\begin{proof}
This is proved in \cite[Lemma 2.10]{BDMY}. Roughly, the ``only if'' direction is obvious because posets of the form $\BB \setminus \{\mathbf{0}\}$ are always separative, and the ``if'' direction is proved by showing that if $\PP$ is separative, then $\axiom(\PP)$ is equivalent to $\axiom(\text{the Boolean completion of }\PP)$. 
\end{proof}

Given a complete Boolean algebra $\BB$, $S(\BB)$ denotes the Souslin number of the poset $\BB \setminus \{\mathbf{0}\}$. Given $J \sub \BB$, $\bigwedge J$ denotes the infimum of $J$ in $\BB$ and $\bigvee J$ denotes the supremum of $J$ in $\BB$.

\begin{lemma}\label{lem:chaincondition}
Let $\BB$ be a complete Boolean algebra and let $J \sub \BB$. Then there is some $J' \sub J$ with $\card{J'} < S(\BB)$ such that $\bigwedge J' = \bigwedge J$ and $\bigvee J' = \bigvee J$.
\end{lemma}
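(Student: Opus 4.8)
The plan is to reduce to the supremum case and treat it by a greedy transfinite construction whose "new parts'' form an antichain. Write $\kappa = S(\BB)$, and recall from the Erd\H{o}s--Tarski theorem that $\kappa$ is regular; in particular a union of two sets each of size $<\kappa$ again has size $<\kappa$, since $\kappa$ is infinite.

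First I would prove the one-sided statement: for every $J \sub \BB$ there is $J_0 \sub J$ with $\card{J_0} < \kappa$ and $\bigvee J_0 = \bigvee J$. Build a sequence $\seq{j_\a}{\a < \lambda}$ of elements of $J$ by recursion: having chosen $\seq{j_\b}{\b < \a}$, set $s_\a = \bigvee_{\b<\a} j_\b$, and note $s_\a \leq \bigvee J$ always; if $s_\a = \bigvee J$ halt, and otherwise $s_\a$ fails to be an upper bound of $J$, so choose $j_\a \in J$ with $j_\a \not\leq s_\a$. The key computation is that the residuals $a_\a = j_\a \wedge \neg s_\a$ are nonzero (because $j_\a \not\leq s_\a$) and pairwise disjoint: for $\a < \b$ we have $s_\b \geq j_\a$, whence $a_\b \leq \neg s_\b \leq \neg j_\a \leq \neg a_\a$. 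Thus $\set{a_\a}{\a < \lambda}$ is an antichain, and since $\BB$ has no antichain of size $\kappa$ the recursion must halt at some stage $\lambda < \kappa$; at that stage $s_\lambda = \bigvee J$, so $J_0 = \set{j_\a}{\a < \lambda}$ is as required.

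For the infimum I would dualize by complementation. Applying the previous paragraph to $\set{\neg j}{j \in J}$ yields a subset of size $<\kappa$ achieving $\bigvee \set{\neg j}{j \in J}$; since complementation is a bijection this subset has the form $\set{\neg j}{j \in J_1}$ for some $J_1 \sub J$ with $\card{J_1} < \kappa$, and then $\bigwedge J_1 = \neg \bigvee \set{\neg j}{j \in J_1} = \neg \bigvee \set{\neg j}{j \in J} = \bigwedge J$. Finally set $J' = J_0 \cup J_1$, so that $\card{J'} < \kappa$. Using $J_0, J_1 \sub J$ together with $\bigvee J_0 = \bigvee J$ and $\bigwedge J_1 = \bigwedge J$, one checks $\bigvee J' = \bigvee J_0 \vee \bigvee J_1 = \bigvee J$ and $\bigwedge J' = \bigwedge J_0 \wedge \bigwedge J_1 = \bigwedge J$.

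I expect the main obstacle to be the middle step, and specifically the verification that the residuals $a_\a$ are simultaneously nonzero and pairwise disjoint. Once that is in hand, the antichain bound supplied by the Souslin number does all the real work, and the remaining combination of $J_0$ and $J_1$ is routine bookkeeping made painless by the infinitude of $\kappa$.
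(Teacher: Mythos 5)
Your proof is correct, and its overall skeleton is the same as the paper's: both reduce the two-sided statement to a one-sided one, transfer that across complementation by de Morgan's laws, and finish by taking the union of the two small subsets (as you note, regularity of $S(\BB)$ is not really needed for that last step; the union of two sets of size $<\kappa$ has size $<\kappa$ for any infinite $\kappa$). The genuine difference is in how the one-sided statement is obtained. The paper does not prove it at all: the infimum version is quoted as a special case of \cite[Lemma 3.2]{BDMY}, and the supremum version is then derived by de Morgan. You instead prove the supremum version from scratch, by the transfinite greedy construction in which the residuals $a_\alpha = j_\alpha \wedge \neg s_\alpha$ are nonzero (as $j_\alpha \not\leq s_\alpha$) and pairwise disjoint (as $j_\alpha \leq s_\beta$ for $\alpha < \beta$), so they form an antichain and the Souslin number bounds the length of the recursion; when the recursion halts, the partial supremum has reached $\bigvee J$. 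That verification is correct, and it is the standard Erd\H{o}s--Tarski-style exhaustion argument that presumably underlies the cited lemma. What the citation buys the paper is brevity; what your route buys is a self-contained proof that can be checked without consulting \cite{BDMY}, at the cost of reproducing a known argument.
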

\begin{proof}
If we delete the ``and $\bigvee J' = \bigvee J$'' from the end of the lemma, then it becomes a special case of \cite[Lemma 3.2]{BDMY}. If we delete the ``$\bigwedge J' = \bigwedge J$ and'' instead, then it follows from the previous sentence via de Morgan's laws.
Thus given $J \sub \BB$, there is some $J'_\wedge \sub J$ with $\card{J'_\wedge} < S(\BB)$ such that $\bigwedge J'_\wedge = \bigwedge J$, and there is some $J'_\vee \sub J$ with $\card{J'_\vee} < S(\BB)$ such that $\bigvee J'_\vee = \bigvee J$. Then $J' = J'_\wedge \cup J'_\vee$ satisfies the conclusion of the lemma.
\end{proof}

\begin{lemma}\label{lem:stabilize}
Let $\BB$ be a complete Boolean algebra and let $X \sub \BB$ with ${\card{X} = S(\BB})$. Then there is some $Y \sub X$ with $\card{X \setminus Y} < S(\BB)$ such that $\bigwedge Y = \bigwedge (Y \setminus Z)$ for every $Z \sub Y$ with $\card{Z} < S(\BB)$.
\end{lemma}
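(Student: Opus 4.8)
Write $\k = S(\BB)$; by the Erd\H{o}s--Tarski theorem $\k$ is a regular cardinal, so $\BB \setminus \{\mathbf{0}\}$ has no antichain of size $\k$. The plan is to build $Y$ by a transfinite process that repeatedly deletes ``offending'' small subsets from $X$ until the required stability is reached, and then to bound the length of this process using the $\k$-cc. Set $X_0 = X$. Given $X_\xi$, if it already satisfies $\bigwedge(X_\xi \setminus Z) = \bigwedge X_\xi$ for every $Z \sub X_\xi$ with $\card{Z} < \k$, then stop and put $Y = X_\xi$. Otherwise choose $Z_\xi \sub X_\xi$ with $\card{Z_\xi} < \k$ and $\bigwedge(X_\xi \setminus Z_\xi) \neq \bigwedge X_\xi$, and set $X_{\xi+1} = X_\xi \setminus Z_\xi$. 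At limit stages $\lambda$ put $X_\lambda = \bigcap_{\xi < \lambda} X_\xi$.

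The sets $X_\xi$ are decreasing, so writing $b_\xi = \bigwedge X_\xi$ the sequence $\seq{b_\xi}{\xi}$ is nondecreasing, and I claim it is strictly increasing. At a successor step the deletion was chosen precisely so that $b_{\xi+1} = \bigwedge(X_\xi \setminus Z_\xi)$ differs from $b_\xi$; since $X_{\xi+1} \sub X_\xi$ forces $b_{\xi+1} \geq b_\xi$, this gives $b_{\xi+1} > b_\xi$. At a limit $\lambda$ we have $X_\lambda \sub X_{\xi+1}$ for every $\xi < \lambda$, whence $b_\lambda \geq b_{\xi+1} > b_\xi$; thus $b_\eta > b_\xi$ whenever $\xi < \eta$.

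The key point is that a strictly increasing $\k$-sequence in $\BB$ is impossible under the $\k$-cc. For $\xi < \k$ put $a_\xi = b_{\xi+1} \wedge \neg b_\xi$; each $a_\xi$ is nonzero because $b_{\xi+1} > b_\xi$, and for $\xi < \eta$ the inequality $b_{\xi+1} \leq b_\eta$ yields $a_\xi \wedge a_\eta \leq b_{\xi+1} \wedge \neg b_\eta = \mathbf{0}$, so $\set{a_\xi}{\xi < \k}$ would be an antichain of size $\k$, contradicting $\k = S(\BB)$. Hence the process must halt at some stage $\xi^* < \k$, and $Y = X_{\xi^*}$ has the required stability property by the stopping condition.

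Finally I would check that not too much was deleted. At every stage $\xi < \k$ one has $X \setminus X_\xi = \bigcup_{\eta < \xi} Z_\eta$, a union of fewer than $\k$ sets each of size $< \k$; since $\k$ is regular, this set has size $< \k$ (in particular each $X_\xi$ still has size $\k$, so the infima are genuinely taken over nonempty sets). Thus $\card{X \setminus Y} = \card{X \setminus X_{\xi^*}} < \k = S(\BB)$, as required. The one step demanding care is the strictness of $\seq{b_\xi}{\xi}$ at limit stages, where $b_\lambda$ may jump strictly above $\bigvee_{\xi < \lambda} b_\xi$; the containment $X_\lambda \sub X_{\xi+1}$ is exactly what keeps the sequence strictly increasing there, which is what makes the antichain bound apply.
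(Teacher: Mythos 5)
Your proof is correct, and it takes a genuinely different route from the paper's. The paper gives a one-shot construction: enumerate $X = \set{b_\alpha}{\alpha < \kappa}$ in order type $\kappa = S(\BB)$, form the non-decreasing sequence of tail infima $c_\alpha = \bigwedge\set{b_\xi}{\xi \geq \alpha}$, use Lemma~\ref{lem:chaincondition} together with the regularity of $\kappa$ to find $\beta < \kappa$ with $\bigvee\set{c_\alpha}{\alpha < \kappa} = \bigvee\set{c_\alpha}{\alpha < \beta}$, so that $c_\alpha = c_\beta$ for all $\alpha \geq \beta$, and then take $Y = \set{b_\xi}{\xi \geq \beta}$: any $Z \sub Y$ of size $<\kappa$ is contained in $\set{b_\xi}{\xi < \alpha}$ for some $\alpha \geq \beta$, so $\bigwedge(Y \setminus Z)$ is squeezed between $c_\beta$ and $c_\alpha = c_\beta$. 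You instead run a transfinite deletion process and force it to halt by converting a putative strictly increasing $\kappa$-sequence of infima into the antichain $\set{b_{\xi+1} \wedge \neg b_\xi}{\xi < \kappa}$, using regularity to keep the total deleted set small. Both arguments ultimately rest on the same fact---a $\kappa$-cc complete Boolean algebra has no strictly increasing $\kappa$-chain---but the paper imports this through Lemma~\ref{lem:chaincondition} (already proved, resting on a lemma from \cite{BDMY}), which makes its proof shorter and free of recursion bookkeeping, whereas yours is self-contained and makes the chain-condition mechanism explicit; as a minor bonus, your argument works verbatim even if $\mathbf{0} \in X$, a case the paper's proof quietly sets aside by fixing $X \sub \BB \setminus \{\mathbf{0}\}$. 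The price is the extra care at limit stages, the halting argument, and the cardinality estimate on $\bigcup_{\eta < \xi} Z_\eta$, all of which you handle correctly.
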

\begin{proof}
Let $\k = S(\BB)$. Fix $X \sub \BB \setminus \{\mathbf{0}\}$ with $\card{X} = \k$, and let $\set{b_\a}{\a < \k}$ be an enumeration of $X$ with order type $\k$. 
Let $c_\a \,=\, \bigwedge \set{b_\xi}{\xi \geq \a}$ for each $\a < \k$, and note that $\a \leq \a'$ implies $c_\a \leq c_{\a'}$.
By Lemma~\ref{lem:chaincondition}, there is some $\b < \k$ such that $\bigvee \set{c_\a}{\a < \k} \,=\, \bigvee \set{c_\a}{\a < \b}$. 
(This uses the fact that $\k$ is regular: as mentioned above, the Souslin number of a poset is always a regular cardinal.) 
Because the $c_\a$ form a non-decreasing sequence in $\BB$, this means $c_\a = c_\b$ for all $\a \geq \b$. 
Let $Y = \set{b_\xi}{\xi \geq \b}$.
If $Z \sub Y$ with $\card{Z} < \k$, then there is some $\a$ with $\b \leq \a < \k$ such that $Z \sub \set{b_\xi}{\xi < \a}$.
But then
$$\textstyle c_\b = \bigwedge Y \,\leq\, \bigwedge \left( Y \setminus Z \right) \,\leq\, \bigwedge \set{b_\xi}{\xi \geq \a} = c_\a = c_\b.$$
Therefore $\bigwedge \left( Y \setminus Z \right) = c_\b$ for any $Z \sub Y$ with $\card{Z} < \k$.
\end{proof}

If $\FF$ is a forcing poset and $A$ is a set, recall that a \emph{nice name} for a subset of $A$ is a subset $\dot X$ of $A \times \FF$ such that for each $a \in A$, $\set{p \in \FF}{(a,p) \in \dot X}$ is an antichain in $\FF$.
Given $B \sub A$, $\dot X \rest B = \dot X \cap (B \times \FF)$.
We adopt the convention of deleting a dot to denote the evaluation of a name. For example, if $\dot X$ is a nice $\FF$-name for a subset of $\mu$, then we write $\mathbf{1}_\FF \forces$ ``$X \sub \mu$.''

\begin{lemma}\label{lem:stabilize2}
Let $\FF$ be a ccc notion of forcing, let $\dot \rel$ be an $\FF$-name for a relation on some infinite cardinal $\mu$, and suppose that $\1_\FF \forces$ ``$\,(\mu,\rel)$ is a complete Boolean algebra with $S(\mu,\rel) = \k$."
Let $p \in \FF$ and let $\dot X$ be a nice name for a subset of $\mu$. 
If $p \forces$ ``$\,|X| = \k$'' then there is some $\dot Y \sub \dot X$ with $|\dot Y \setminus \dot X| < \k$ such that $p \forces$ ``$\bigwedge Y = \bigwedge (Y \setminus Z)$ for any $Z \sub \mu$ with $\card{Z} < \k$."
\end{lemma}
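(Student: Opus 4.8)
The plan is to pass to a generic extension, apply Lemma~\ref{lem:stabilize} there, and then descend to a name in the ground model. The one conceptual subtlety is that the requirement $\dot Y \sub \dot X$ together with $\card{\dot X \setminus \dot Y} < \k$ severely constrains how $\dot Y$ may differ from $\dot X$: deleting fewer than $\k$ pairs from a nice name touches fewer than $\k$ of the coordinates $a \in \mu$, so any admissible $\dot Y$ must agree with $\dot X$ off a \emph{fixed} ground-model set $W_0 \sub \mu$ of size $<\k$. I would therefore look for such a $W_0$ directly and simply take $\dot Y = \dot X \rest (\mu \setminus W_0)$.

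First I would fix a generic $G$ with $p \in G$ and work in $V[G]$, where $(\mu,\rel)$ is a complete Boolean algebra with $S(\mu,\rel) = \k$ and $X$ is a subset of size $\k$. Lemma~\ref{lem:stabilize} then applies and yields $Y_G \sub X$ with $\card{X \setminus Y_G} < \k$ on which the meet has stabilized. Setting $Z^* = X \setminus Y_G$, this shows that $p \forces$ ``there is $Z^* \sub X$ with $\card{Z^*} < \k$ such that $\bigwedge (X \setminus Z^*) = \bigwedge\big((X \setminus Z^*)\setminus Z\big)$ for every $Z$ with $\card{Z} < \k$.'' By the maximal principle I would fix a single name $\dot Z^*$ for which $p$ forces this statement.

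The key ground-model step is then a covering argument, and this is where I expect the real work to lie. Since $\FF$ is ccc and $\k$ is regular and uncountable — it is the Souslin number of an infinite complete Boolean algebra, hence uncountable, and regular by the theorem of Erd\H{o}s and Tarski — every subset of $\mu$ of size $<\k$ appearing in $V[G]$ is contained in a ground-model set of size $<\k$. Concretely, as $p \forces \card{\dot Z^*} < \k$, the possible values of $\card{\dot Z^*}$ below $p$ are decided by a countable antichain and are each $<\k$, so their supremum $\lambda^*$ is $<\k$ by regularity; fixing a name $\dot h$ for a surjection of $\lambda^*$ onto $\dot Z^*$ and letting $W_0$ be the union over $\xi < \lambda^*$ of the (countably many) possible values of $\dot h(\xi)$ below $p$ produces $W_0 \in V$ with $\card{W_0} \leq \lambda^* \cdot \aleph_0 < \k$ and $p \forces \dot Z^* \sub W_0$. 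This is exactly the point at which a single ground-model set is forced to serve simultaneously for all generics below $p$, and it relies essentially on $\k$ being regular and uncountable.

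Finally I would verify that $\dot Y = \dot X \rest (\mu \setminus W_0)$ works. In any extension by $G \ni p$ we have $Y = X \setminus W_0$, and since $Z^* \sub W_0$ this is obtained from $Y_G = X \setminus Z^*$ by deleting at most $\card{W_0} < \k$ elements. As $Y_G$ is stabilized, deleting fewer than $\k$ of its elements leaves its meet unchanged; hence for any $Z$ with $\card{Z} < \k$ both $Y$ and $Y \setminus Z$ arise from $Y_G$ by deleting $<\k$ elements, giving $\bigwedge (Y \setminus Z) = \bigwedge Y_G = \bigwedge Y$. Thus $p \forces$ ``$\bigwedge Y = \bigwedge (Y \setminus Z)$ for every $Z \sub \mu$ with $\card{Z} < \k$.'' For the size bound, $\dot X \setminus \dot Y = \dot X \cap (W_0 \times \FF)$, and since $\FF$ is ccc each coordinate contributes only a countable antichain, so $\card{\dot X \setminus \dot Y} \leq \card{W_0} \cdot \aleph_0 < \k$, which is the required inequality.
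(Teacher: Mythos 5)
Your proof is correct and takes essentially the same route as the paper's: apply Lemma~\ref{lem:stabilize} together with the maximal principle to obtain a name for the stabilized set, use the ccc and the regularity of $\k$ to cover the discarded part by a ground-model set $W_0 \sub \mu$ of size $<\k$, and take $\dot Y = \dot X \rest (\mu \setminus W_0)$, verifying the meet equality by noting that $Y$ and $Y \setminus Z$ each differ from the stabilized set by fewer than $\k$ elements. The only difference is cosmetic: the paper asserts the ccc covering fact without proof, whereas you spell out the standard argument for it in detail.
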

\begin{proof}
As $\mu$ is infinite, $\k$ must be a regular uncountable cardinal. 
Because $\FF$ has the ccc, we know that for every $\FF$-name $\dot W$ for a subset of $\mu$, if $q \in \FF$ and $q \forces$ ``$|W| < \k$", then there is some $A \sub \mu$ (in the ground model) such that $|A| < \k$ and $q \forces$ ``$W \sub A$.'' 

By Lemma~\ref{lem:stabilize}, and the existential completeness lemma, there is a name $\dot Y_0$ for a subset of $\mu$ such that $p \forces$ ``$Y_0 \sub X$ and $\card{X \setminus Y_0} < \k$ and $\bigwedge Y_0 = \bigwedge (Y_0 \setminus Z)$ for every $Z \sub Y$ with $\card{Z} < \k$." By the previous paragraph, there is some $A \sub \mu$ (in the ground model) such that $\card{A} < \k$ and $p \forces$ ``$X \setminus Y_0 \sub A$." Furthermore, $p \forces$ ``$\bigwedge ((X \setminus A) \setminus Z) = \bigwedge X \setminus (A \cup Z) = \bigwedge Y_0 = \bigwedge Y_0 \setminus A = \bigwedge X \setminus A$ for any $Z \sub \mu$ with $\card{Z} < \k$."

Let 
$\dot Y = \dot X \rest (\mu \setminus A)$.
Clearly $\dot Y \sub \dot X$ and $p \forces$ ``$Y = X \setminus A$." Because $\dot X$ is a nice name and $\FF$ has the ccc, $\set{q \in \FF}{(q,a) \in \dot X}$ is countable for every $a \in A$; therefore $|\dot X \setminus \dot Y| \leq \aleph_0 \cdot |A| < \k$. Finally, because $p \forces$ ``$Y = X \setminus A$", the last assertion of the lemma follows from the last sentence of the previous paragraph.
\end{proof}


Given a cardinal $\lambda$, let $\cohen{\lambda}$ denote the poset of finite partial functions $\lambda \to \{0,1\}$, the standard forcing poset for adding $\lambda$ Cohen reals.

\begin{theorem}\label{thm:cohen}
Suppose $V$ is a model of $\gch+\square$ (or, more generally, suppose $V$ is a model satisfying the conclusion of Theorem~\ref{thm:Davies}). If $\lambda$ is any cardinal and $G$ is $\cohen{\lambda}$-generic over $V$, then $V[G] \models \axiom$.
\end{theorem}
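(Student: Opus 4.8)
The plan is to reduce, via the lemma above reducing \axiom to complete Boolean algebras, to showing $\axiom(\BB\setminus\{\mathbf{0}\})$ for an arbitrary complete Boolean algebra $\BB$ of $V[G]$ with $S(\BB)=\k$. Since $\cohen{\lambda}$ is ccc it preserves cardinals and cofinalities, and since the Souslin number is always regular, $\k$ is a regular cardinal of $V$ with $\k\geq\aleph_1$; I may assume the underlying set of $\BB$ is a cardinal $\mu\geq\k$ and that its Boolean structure is given by a nice $\cohen{\lambda}$-name $\dot\rel$. As $\cohen{\lambda}$ is ccc, $\dot\rel$ mentions at most $\mu$ coordinates, so using the product decomposition $\cohen{\lambda}\iso\cohen{A}\times\cohen{\lambda\setminus A}$ and the homogeneity of Cohen forcing I would arrange that we are in fact forcing with $\cohen{\mu}$ and that $\1\forces$ ``$(\mu,\rel)$ is a complete Boolean algebra with $S(\mu,\rel)=\k$''. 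Put $\nu=\mu^+$, a regular cardinal above $\k$.

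Next I would fix, in the ground model, a $\k$-sage Davies tree $\seq{M_\a}{\a<\nu}$ for $\nu$ over $p=(\dot\rel,\mu,\k,\cohen{\mu})$, together with the coherent well-orderings $\sqsubset_\a$ provided by Lemma~\ref{lem:property6}. For each $\a$ set $A_\a=\mu\cap M_\a$; then $\cohen{\mu}\cap M_\a=\cohen{A_\a}$ is a regular subforcing with $\cohen{\mu}\iso\cohen{A_\a}\times\cohen{\mu\setminus A_\a}$, and writing $G_\a=G\rest A_\a$ we have $M_\a[G_\a]\prec H_\theta[G]$. I would then let $\BB_\a=\mu\cap M_\a$ be the set of elements of $\BB$ lying in $M_\a$, with the operations inherited from $\BB$, and $\BB_{<\a}=\bigcup_{\xi<\a}\BB_\xi$, which by property~(3) is a union of fewer than $\k$ many models $N[G\cap N]$, $N\in\mathcal N_\a$. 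Using elementarity, the ccc covering fact recorded in the proof of Lemma~\ref{lem:stabilize2} (every forced set of size $<\!\k$ lies in a ground-model set of size $<\!\k$), Lemma~\ref{lem:chaincondition}, and above all the forcing stabilization of Lemma~\ref{lem:stabilize2}, I would show that each of $\BB_\a$ and $\BB_{<\a}$ is a complete subalgebra of $\BB$ and that every $b\in\BB$ has well-defined projections $\pi_\a(b)=\bigwedge\set{a\in\BB_\a}{a\geq b}\in\BB_\a$ and $\pi_{<\a}(b)\in\BB_{<\a}$, each computed from a name lying in the appropriate model.

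With the filtration in hand I would define $\DD$ by assigning to each $b\in\BB\setminus\{\mathbf{0}\}$ the least stage $\a(b)$ with $b\in\BB_{\a(b)}$ and collecting the ``newly created'' parts of $b$ at that stage, the well-orderings $\sqsubset_\a$ being used to make these choices canonically so that each element of $\DD$ is born at a unique stage. Density of $\DD$ below a given $p$ would follow from the covering property~(2), which guarantees $\bigcup_\a\BB_\a=\BB$, together with convergence of the projections $\pi_\a(p)$ to $p$. For the counting condition I would fix $p$ and argue that any $q\in\DD$ with $q\geq p$ is determined by its stage and by the value $\pi_{\a(q)}(p)$; property~(3), which presents each initial segment of the tree as a union of fewer than $\k$ models, together with the stabilization of the non-increasing sequence of projections of $p$ (Lemma~\ref{lem:stabilize}, in its forced form Lemma~\ref{lem:stabilize2}), would then bound $\card{\set{q\in\DD}{q\geq p}}$ below $\k$.

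The main obstacle is that the $<\!\k$-closure of the models $M_\a$ holds only in $V$: since $\cohen{\mu}$ is not countably closed, new countable sequences of ground-model elements appear, so $M_\a[G_\a]$ need not be $<\!\k$-closed in $V[G]$ when $\k=\aleph_1$. Consequently the completeness of $\BB_\a$ and the existence of the projections $\pi_\a$ cannot be read off from closure, as they would be in a purely ground-model argument, but must instead be re-established through names. This is precisely the function of Lemma~\ref{lem:stabilize2}: below any condition it produces a forcing-robust $<\!\k$-sized approximation to any infimum that stays correct after deleting any further $<\!\k$-set, so that $\pi_\a(b)$ is well-defined and witnessed by an $M_\a$-name despite the missing closure. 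Routing every appeal to closure through Lemma~\ref{lem:stabilize2} and the ccc covering fact is the real work; once that is done, the combinatorics of $\DD$ and the two verifications should go through essentially as over $V$.
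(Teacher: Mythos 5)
Your proposal correctly identifies the toolkit (reduction to complete Boolean algebras, a $\k$-sage Davies tree with the coherent well-orderings of Lemma~\ref{lem:property6}, Lemmas~\ref{lem:chaincondition}--\ref{lem:stabilize2}, the ccc covering fact), but the structural claim your construction rests on is false, and it fails precisely in the case the theorem exists to handle. You claim that in $V[G]$ each $\BB_\a = \mu \cap M_\a$ is a complete subalgebra of $\BB$, so that every $b \in \BB$ has a projection $\pi_\a(b) = \bigwedge \set{a \in \BB_\a}{b \rel a}$ lying in $\BB_\a$. Consider $\k = \aleph_1$ and $\lambda \geq \aleph_2$ (the case needed for Corollary~\ref{cor:main1}), with $\BB$ atomless. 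By elementarity and the ccc (each name in $M_\a$ has all its countably many possible values inside $M_\a$), there is a countable antichain $\set{a_n}{n \in \w} \sub \mu \cap M_\a$ of nonzero elements. For distinct $Y \sub \w$ in $V[G]$ the suprema $\bigvee \set{a_n}{n \in Y}$ are distinct elements of $\BB$, so they form a family of size $\continuum^{V[G]} \geq \aleph_2$, whereas $\card{\mu \cap M_\a} \leq \k = \aleph_1$. Hence $\BB_\a$ is not closed under countable suprema and is not a complete subalgebra of $\BB$; consequently the infimum defining $\pi_\a(b)$, while it exists in $\BB$, need not belong to $\BB_\a$, and a least element of $\BB_\a$ above $b$ need not exist at all. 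Since your density argument (``convergence of the projections $\pi_\a(p)$ to $p$'') and your counting argument (``$q$ is determined by its stage and by $\pi_{\a(q)}(p)$'') are both phrased in terms of these projections, the scheme has no foundation. Lemma~\ref{lem:stabilize2} cannot repair this: it produces, below a single condition, a $<\!\k$-sized subfamily with a forcing-stable infimum, but it neither makes $\BB_\a$ complete nor produces least approximations inside $\BB_\a$.

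The paper's proof takes a genuinely different route, and it is exactly the work your final paragraph defers as ``routine.'' The dense set is built by a recursion of length $\mu$ in $V[G]$: the $\g$-th element is $d_\g = \bigwedge (\dot J_\g)_G$, where $I_\g$ is the $\sqsubseteq$-least \emph{ground-model} $<\!\k$-sized subset of $\mu$ admitting some $J \sub I_\g$ with $0 \neq \bigwedge J \rel \g$, and $\dot J_\g$ is the $\sqsubseteq$-least ground-model nice name witnessing this. Every member of the dense set is thus anchored to ground-model objects (a small set and a nice name), which is what allows the Davies tree --- whose $<\!\k$-closure holds only in $V$ --- to exert control over elements of $V[G]$. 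The counting bound is then proved by contradiction: assuming some $\dlt$ lies below $\k$-many $d_\g$, one first stabilizes $I_\g \equiv I$ (pigeonhole through property (3) and Lemma~\ref{lem:chaincondition}), then runs a $\Delta$-system argument on the Cohen conditions supporting a nice name for the offending index set, applies Lemma~\ref{lem:stabilize2} to get a robust infimum, and finally assembles a single nice name $\dot J$ lying in one model $N$ (here the $<\!\k$-closure of $N$ is used legitimately, in $V$, because $\dot J$ is built from ground-model pieces) which is forced to contradict the $\sqsubseteq$-minimality of $\k$-many $\dot J_\g$'s at their level. None of this survives as ``the combinatorics should go through essentially as over $V$''; it is the substance of the proof, and your outline does not contain it.
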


\begin{proof}
Let $\mu,\k$ be infinite cardinals, and let $\dot \rel$ be a $\cohen{\lambda}$-name such that $\0 \forces$ ``$(\mu,\rel)$ is a complete Boolean algebra with $S(\mu,\rel) = \k$." 
Note that this implies $\k$ is regular and uncountable.
Let $\nu$ be a regular uncountable cardinal with $\lambda,\mu \leq \nu$ and with $\k < \nu$.
Without loss of generality, we may and do assume that $0$ (the ordinal) is equal to $\mathbf{0}$ (the $\rel$-least element of $\mu$).
More precisely, we assume $\0 \forces$ ``$\mathbf{0}_{(\mu,\rel)} = 0$."

We work momentarily in the ground model.
Applying Theorem~\ref{thm:Davies}, 
let $\seq{M_\a}{\a < \nu}$ be a $\k$-sage Davies tree for $\nu$ over $(\mu,\dot \rel)$.
Applying Lemma~\ref{lem:property6}, fix for each $\a < \nu$ some well ordering $\sqsubset_\a$ of $M_\a$ with order type $\k$ such that if $\a < \b < \nu$ and $\a \in M_\b$, then $\sqsubset_\a\, \in M_\b.$

For each $x \in \bigcup_{\a < \nu}M_\a$, the \emph{level} of $x$, denoted $\mathrm{Lev}(x)$, is defined as the least $\a < \nu$ such that $x \in M_\a$.
Let $\sqsubset$ denote the well-order of $\bigcup_{\a < \nu}M_\a$ defined as follows:
\begin{itemize}
\item[$\circ$] if $\mathrm{Lev}(x) < \mathrm{Lev}(y)$, then $x \sqsubset y$.
\item[$\circ$] if $\mathrm{Lev}(x) = \mathrm{Lev}(y) = \a$, then $x \sqsubset y$ if and only if $x \sqsubset_\a y$.
\end{itemize}
We write $x \sqsubseteq y$ to mean that either $x \sqsubset y$ or $x = y$.

We now define, via recursion, a sequence $\seq{d_\g}{\g < \mu}$ of members of $\mu$. 
Simultaneously, we also define a sequence $\seq{I_\g}{\g < \mu}$ of $<\!\k$-sized subsets of $\mu$, and a sequence $\seq{\dot J_\g}{\g < \mu}$ of nice names.
These definitions take place in the extension $V[G]$, and we do not claim that any of these sequences is a member of the ground model $V$.
For the base case, let $d_0 = 0$ and let $I_0 = \dot J_0 = \0$.
For the recursive step, fix $\g < \mu$ and suppose that $d_\b$, $I_\b$, and $\dot J_\b$ are already defined for each $\b \sqsubset \g$.
If there is some $\b \sqsubset \g$ such that $0 \neq d_\b \rel \g$, then set $d_\g = 0$ and set $I_\g = \dot J_\g = \0$.
If there is no such $\b$, then let $I_\g$ denote the $\sqsubseteq$-minimal set in the ground model $V$ with the following two properties:
\begin{itemize}
\item[$\circ$] $I_\g$ is a $<\!\k$-sized subset of $\mu$.
\item[$\circ$] In $V[G]$, there is some $J \sub I_\g$ such that $0 \neq \bigwedge J \rel \g$.
\end{itemize}
Note that $I_\g$ is well-defined because $\{\g\} \in V$ and $\{\g\}$ has both these properties. (Note that this implies $I_\g \sqsubseteq \{\g\}$.)
Because of the second property of $I_\g$ listed above, there is a nice name $\dot J$ in the ground model $V$ for a subset of $I_\g$ such that, for some $p \in G$, we have $p \forces$ ``$(\dot J)_G = J \sub I_\g$ and $0 \neq \bigwedge J \rel \g$."
Let $\dot J_\g$ denote the $\sqsubseteq$-minimal nice $\cohen{\lambda}$-name in $V$ with this property.
Finally, let $d_\g = \bigwedge (\dot J_\g)_G$.

(Note: Because the $I_\g$'s and the $\dot J_\g$'s are defined in the extension, we have in the ground model a name $\dot I_\g$ and a name $\ddot J_\g$ for a nice name for a subset of $\dot I_\g$ that is forced (by $\0$) to be the $\g^{\mathrm{th}}$ element of the sequence constructed above. In particular, $p \forces$ ``$(\ddot J_\g)_G = \dot J_\g \sub I_\g = (\dot I_\g)_G$" for some $p \in G$. Recall our convention of deleting a dot to denote the evaluation of a name!)

Let $\DD = \set{d_\g}{\g < \mu \text{ and } d_\g \neq 0}$. We claim that this set $\DD$ is a witness to the fact that $\axiom(\mu,\rel)$ holds in $V[G]$.

To see that $\DD$ is a dense subset of $(\mu,\rel)$, fix some nonzero $\g < \mu$. 
If $d_\g \neq 0$, then $d_\g \in \DD$ and $d_\g \rel \g$. 
If $d_\g = 0$, then this means there is some $\b \sqsubset \g$ such that $0 \neq d_\b \rel \g$, and so $d_\b \in \DD$ and $d_\b \rel \g$.
Either way, some member of $\DD$ is $\rel \g$. As $\g$ was arbitrary, $\DD$ is dense.

For the more difficult part of the proof, we must show that every $\dlt \in \mu \setminus \{0\}$ has the property that $\card{\set{d \in \DD}{\dlt \rel d}} < \k$. Aiming for a contradiction, let us suppose otherwise. 
Fix some $\dlt \in \mu \setminus \{0\}$ such that $\card{\set{d \in \DD}{\dlt \rel d}} \geq \k$. Let $S = \set{\g < \mu}{d_\g \in \DD \text{ and } \dlt \rel d_\g}$.

Observe that $\b \neq \g$ implies $d_\b \neq d_\g$ whenever $d_\b,d_\g \in \DD$. (This is because if $\b \sqsubset \g$, then $d_\g \neq 0$ implies $d_\b \not\rel \g$ while $d_\g \rel \g$.) Therefore the map $\g \mapsto d_\g$ is injective on $S$, and we may think of $S$ simply as an indexing set for $\set{d \in \DD}{\dlt \rel d} = \set{d_\g}{\g \in S}$.

\begin{claim}
There is some $I \sub \mu$ such that $I_\g = I$ for $\geq\!\k$-many $\g \in S$.
\end{claim}
\begin{proof}[Proof of claim]
Aiming for a contradiction, let us assume the claim is false. Let $\z$ denote the least ordinal $<\!\nu$ with the property that $\mathrm{Lev}(I_\g) < \z$ for $\geq\!\k$-many $\g \in S$. 
Some such $\z$ must exist because $\card{S} \geq \k$ and $\nu$ is a regular cardinal with $\k < \nu$.

By part $(3)$ of our definition of a $\k$-sage Davies tree, there is a collection $\mathcal N$ of $<\!\k$-closed elementary submodels of $H_\theta$ such that $\card{\mathcal N} < \k$ and ${\bigcup \mathcal N = \bigcup_{\xi < \z}M_\xi}$. By our choice of $\z$ and the regularity of $\k$, some $N \in \mathcal N$ has the property that $I_\g \in N$ for $\geq\!\k$-many $\g \in S$. Fix some such $N$, let $S_N = \set{\g \in S}{I_\g \in N}$, and let $\DD_N = \set{d_\g}{\g \in S_N}$. Note that $\bigwedge \DD_N \neq 0$ because $\dlt \rel \bigwedge \DD \rel \bigwedge \DD_N$.

Applying Lemma~\ref{lem:chaincondition}, there is some $T \sub S_N$ with $|T| < \k$ such that ${\bigwedge \DD_N = \bigwedge \set{d_\g}{\g \in T}}$. 
Let $I_0 = \bigcup \set{I_\g}{\g \in T}$. Then $I_0$ is a subset of $N \cap \mu$ in $V[G]$, and $\card{I_0} < \k$. Because $\cohen{\lambda}$ has the ccc, there is a subset $I$ of $N \cap \mu$ in $V$ with $I_0 \sub I$ and $\card{I} \leq \card{I_0} \cdot \aleph_0 < \k$. 
Because $N$ is $<\!\k$-closed in $V$, we have $I \in N$.

For each $\g \in T$, there is a subset $J_\g = (\dot J_\g)_G$ of $I_\g$ with $\bigwedge J_\g = d_\g$. Note that $\bigwedge \set{d_\g}{\g \in T} = \bigwedge_{\g \in T} \bigwedge J_\g = \bigwedge \!\left( \bigcup_{\g \in T}J_\g \right)$, and let ${J = \bigcup_{\g \in T}J_\g}$. 
Now $J \sub I$, and $\bigwedge J = \bigwedge \set{\bigwedge J_\g}{\g \in T}= \bigwedge \set{d_\g}{\g \in T} = \bigwedge \DD_N$. Furthermore, $0 \neq \bigwedge \DD_N \rel d_\g$ for each $\g \in S_N$. Thus, for each $\g \in S_N$, there is a subset $J$ of $I$ such that $0 \neq \bigwedge J \rel d_\g \rel \g$.

This shows that $I$ satisfies the conditions in the definition of $I_\g$ whenever $\g \in S_N$. It follows that $I_\g \sqsubseteq I$ for all $\g \in S_N$.
Now, our definition of $\sqsubseteq$ entails that $I$ has $<\!\k$-many $\sqsubseteq$-predecessors in $\mathrm{Lev}(I)$, and each predecessor $I' \sqsubseteq I$ has $<\!\k$-many $\g \in S_N$ with $I_\g = I'$ (by our assumption at the beginning of the proof of this claim).
Therefore $\mathrm{Lev}(I_\g) = \mathrm{Lev}(I)$ for only $<\!\k$-many $\g \in S_N$. 
As $\mathrm{Lev}(I_\g) \leq \mathrm{Lev}(I)$ for all $\g \in S_N$ and $\card{S_N} \geq \k$, it follows that $\mathrm{Lev}(I_\g) < \mathrm{Lev}(I)$ for $\geq\!\k$-many $\g \in S_N$.
But $\mathrm{Lev}(I) < \z$, because $I \in N \sub \bigcup_{\xi < \z}M_\xi$, so this contradicts our choice of $\z$.
\end{proof}

Fix some $I \sub \mu$ with $\card{I} < \k$ that satisfies the conclusion of the above claim. By replacing $S$ with a size-$\k$ subset of $\set{\g \in S}{I_\g = I}$ if necessary, we may (and do) assume that $|S| = \k$, $I_\g = I$ for all $\g \in S$, and $\dlt \rel d_\g$ for all $\g \in S$.

Let $\z$ denote the least ordinal $<\!\nu$ such that there are $\k$-many $\g \in S$ with $\mathrm{Lev}(\dot J_\g) < \z$. 
(Some such $\z$ must exist because $\nu$ is a regular cardinal with $|S| = \k < \nu$.)
By replacing $S$ with $\set{\g \in S}{\mathrm{Lev}(\dot J_\g) < \z}$ if necessary, we may (and do) assume that $\mathrm{Lev}(\dot J_\g) < \z$ for all $\g \in S$.

Recall that the sequence $\seq{ \dot J_\g }{ \g < \mu }$ was defined in the extension, not in the ground model. In the ground model, we have a sequence $\seq{ \ddot J_\g }{ \g < \mu }$ of names for nice names, representing the sequence $\seq{ \dot J_\g }{ \g < \mu }$ constructed in the extension, meaning that $\0 \forces$ ``$(\ddot J_\g)_G = \dot J_\g$ for each $\g < \mu$.''

We now work in the ground model $V$. Let $\dot S$ be a nice $\cohen{\lambda}$-name for $S$, and fix some $p \in \cohen{\lambda}$ such that
\begin{align*}
p \forces \quad & |S| = \k, \\
& I_\g = I \text{ for all } \g \in S, \\
& \dlt \rel d_\g \text{ for all } \g \in S, \\
& \mathrm{Lev}((\ddot J_\g)_G) < \z \text{ for all } \g \in S, \text{ and } \\
& \text{if } \z' < \z \text{ then } \card{\set{\g \in S}{\mathrm{Lev}(\dot J_\g) < \z'}} < \k.
\end{align*}
Let $q$ be an arbitrary extension of $p$ in $\cohen{\lambda}$.

\begin{claim}
There is a nice name $\dot S' = \set{(\g_\a,q_\a)}{\a < \k} \sub \dot S$, a condition $r \supseteq q$, and a sequence $\seq{\dot K_{\g_\a}}{\a < \k}$ (in the ground model $V$) of nice names for subsets of $I$, such that $\mathrm{dom}(q_\a) \cap \mathrm{dom}(q_\b) = \0$ for all $\a \neq \b$ in $\k$, and 
$$r \forces \quad |S'| = \k \ \text{ and } \ \dot J_\g = (\ddot J_\g)_G = \dot K_\g \text{ for all } \g \in S'.$$
Furthermore, if $\dot T$ is any size-$\k$ subset of $\dot S'$ and $t \supseteq r$, then the above statement remains true when $\dot S'$ is replace by $\dot T$ and $r$ is replaced by $t$.
\end{claim}
\begin{proof}[Proof of claim]
Because $\dot S$ is a nice name for a subset of $\mu$ and $\cohen{\lambda}$ has the ccc, we may write $\dot S = \set{(\g_\a,p_\a)}{\a < \k}$, where $\g_\a < \mu$ and $p_\a \in \cohen{\lambda}$ for all $\a$, and where any particular ordinal appears only countably many times among the $\g_\a$, i.e., $\card{\set{\a < \k}{\g_\a = \g}} \leq \aleph_0$ for every $\g < \mu$. 

Letting $\dot S_1 = \dot S \setminus \set{(\g_\a,p_\a)}{p_\a \perp q}$, it is clear that $q \forces S_1 = S$. Note that $|\dot S_1| = \k$, because $q \forces$ ``$S_1 = S$ and $|S| = \k$." 

For every $(\g_\a,p_\a) \in \dot S_1$, $p_\a$ is compatible with $q$ and $q \cup p_\a \forces$ ``$(\ddot J_{\g_\a})_G$ is a nice name (in $V$) for a subset of $I$ and $\mathrm{Lev}((\ddot J_{\g_\a})_G) < \z$.'' For each such $\a$, we may therefore choose some $q_\a^0 \supseteq q \cup p_\a$ that decides $\ddot J_{\g_\a}$; that is, we choose some $q_\a^0 \supseteq q \cup p_\a$ and some nice name $\dot K_{\g_\a} \in V$ with $\mathrm{Lev}(\dot K_{\g_\a}) < \z$ such that $q_\a^0 \forces$ ``$\dot J_{\g_\a} = (\ddot J_{\g_\a})_G = \dot K_{\g_\a}$.''

By the $\Delta$-system lemma, there is some $D \sub \set{\a}{(\g_a,p_\a) \in \dot S_1}$ with $\card{D} = \k$ such that $\set{\mathrm{dom}(q_\a^0)}{\a \in D}$ is a $\Delta$-system with root $R$. 
(We allow for the possibility that this is a ``degenerate'' $\Delta$-system with $\mathrm{dom}(q_\a^0) = R$ for all $\a < \k$.)
By the pigeonhole principle, there is some $r: R \to 2$ and some $E \sub D$ with $\card{E} = \k$ such that $q_\a^0 \rest R = r$ for all $\a \in E$.
Note that $r \supseteq q \supseteq p$, because $q_\a^0 \supseteq q$ for each $\a$.
Let $\dot S_2 = \set{(\g_\a,q_\a^0)}{\a \in E}$. By relabelling and re-indexing the members of $\dot S_2$, we may write $\dot S_2 = \set{(\g_\a,q_\a^0)}{\a < \k}$.
Finally, let $q_\a = q_\a^0 \setminus r$ for all $\a$ and let $\dot S' = \set{(\g_\a,q_\a)}{\a < \k}$. 
It is clear that $r \forces$ ``$\dot S' = \dot S_2$", and this implies $r \forces$ ``$\dot J_\g = (\ddot J_\g)_G = \dot K_\g$ for all $\g \in S'$." Clearly $\mathrm{dom}(q_\a) \cap \mathrm{dom}(q_\b) = \0$ for all $\a \neq \b$ in $\k$.

Finally, suppose $\dot T \sub \dot S'$ with $|\dot T| = \k$, and fix $t \in \cohen{\lambda}$ with $t \supseteq r$.
That $t \forces |T| = \k$ follows from the fact that the domains of the $q_\a$'s are pairwise disjoint (so that any generic filter must include $\k$ of the $q_\a$'s), together with the fact that any particular ordinal appears only countably many times among the $\g_\a$. We have $t \forces$ ``$\dot J_\g = (\ddot J_\g)_G = \dot K_\g$ for all $\g \in T\,$" because $r \forces$ ``$\dot J_\g = (\ddot J_\g)_G = \dot K_\g$ for all $\g \in S'$" and $t \forces$ ``$T \sub S'$."
\end{proof}

Fix some nice name $\dot S'$ as in the claim above.

By part $(3)$ of our definition of a sage Davies tree, there is a collection $\mathcal N$ of $<\!\k$-closed closed elementary submodels of $H_\theta$ with $\card{\mathcal N} < \k$ such that $\bigcup \mathcal N = \bigcup_{\xi < \z}M_\xi$. 
By the pigeonhole principle, some $N \in \mathcal N$ has the property that $\dot K_{\g_\a} \in N$ for $\k$-many $\a < \k$.
Fix some such $N$.


Let $\dot S_N' = \set{(\g_\a,q_\a) \in \dot S'}{\dot K_{\g_\a} \in N}$.
Applying Lemma~\ref{lem:stabilize2}, there is some $\dot T \sub \dot S_N'$ with $\card{\dot S'_N \setminus \dot T} < \k$ such that 
\begin{align*}
r \forces \quad & \textstyle \bigwedge \set{\bigwedge K_\g}{\g \in T} = \bigwedge \set{\bigwedge K_\g}{\g \in T \setminus Z} \\
& \text{ for any } Z \sub \mu \text{ with } |Z| < \k.
\end{align*}
By re-labelling and re-indexing the $q_\a$ and $\g_\a$ one final time, let us write $\dot T = \set{(q_\a,\g_\a)}{\a < \k}$.

\begin{claim}
For any $\a < \k$ and any $s$ compatible with $q_\a$, if $s \forces$ ``$\,i \in K_{\g_\a}\!$'' then $q_\a \cup s \forces$ ``for any $j \in I$ with $j \not\rel i$, there is some $i' \in I$ such that $j \not\rel i'$ and $i' \in K_\g$ for $\k$-many $\g \in T$."
\end{claim}
\begin{proof}[Proof of claim]
For the proof of this claim, it is more convenient to work in a generic extension. Suppose $s$ is compatible with $q_\a$ and $s \forces$ ``$i \in K_{\g_\a}$", and let $V[H]$ be an arbitrary $\cohen{\lambda}$-generic extension of $V$ with $q_\a \cup s \in H$.

Fix $j \in I$ with $j \not\rel i$.
Because $q_\a \in H$, we have $\g_\a \in T$.
Therefore $\bigwedge \set{\bigwedge K_\g}{\g \in T} \rel \bigwedge K_{\g_\a} \rel i$. 
As $j \not\rel i$, we have $j \not\rel \bigwedge \set{\bigwedge K_\g}{\g \in T}$.
By our choice of $T$, we also have $j \not\rel \bigwedge \set{\bigwedge K_\g}{\g \in T \setminus Z}$ for any $<\!\k$-sized $Z \sub T$.
This implies there are $\k$-many $\g \in T$ such that $j \not\rel \bigwedge K_\g$.
For each such $\g$, there is some $i' \in I$ such that $i' \in K_\g$ and $j \not\rel i'$.
By the pigeonhole principle, using the fact that $\card{I} < \k$, there is some particular $i' \in I$ with $j \not\rel i'$ such that $i' \in K_\g$ for $\k$-many $\g \in T$.

Thus any generic extension $V[H]$ with $q_\a \cup s \in H$ satisfies ``for any $j \in I$ with $j \not\rel i$, there is some $i' \in I$ such that $j \not\rel i'$ and $i' \in K_\g$ for $\k$-many $\g \in T$." The claim follows.
\end{proof}

Given $i \in I$ and $\a < \k$, we write ``$i \in \mathrm{supp}(\dot K_{\g_\a})$" to mean $(i,s) \in \dot K_{\g_\a}$ for some $s \in \cohen{\lambda}$. Let 
$$I_\k = \set{i \in I}{i \in \mathrm{supp}(\dot K_{\g_\a}) \text{ for } \k \text{-many values of } \a}.$$
Note that $I_\k \sub N$ (because each $\dot K_{\g_\a}$ is in $N$). Let
$$\dot K = \set{(i,s) \in N}{i \in I_\k \text{ and } s \forces \text{``}i \in K_\g \text{ for infinitely many } \g \in T\, \text{''}}.$$
Notice that $\dot K \sub N$, although we cannot claim $\dot K \in N$.
The following claim gives us the next best thing to having $\dot K \in N$.

\begin{claim}
There is a nice name $\dot J$ for a subset of $I$, with $\dot J \in N$, such that $\0 \forces$ ``$J = K$."
\end{claim}
\begin{proof}[Proof of Claim]
For each $i \in \mathrm{supp}(\dot K)$, fix an antichain $\mathcal A_i$ in $\cohen{\lambda} \cap N$ such that $s \forces$ ``$i \in K$'' for every $s \in \mathcal A_i$, and $\mathcal A_i$ is maximal with respect to this property (i.e., if $t \in \cohen{\lambda} \cap N$ and $t \forces$ ``$i \in K$", then $t$ is compatible with some member of $\mathcal A_i$). Let $\dot J = \set{(i,s)}{i \in \mathrm{supp}(\dot K) \text{ and }s \in \mathcal A_i}$.

Clearly $\dot J$ is a nice name for a subset of $I$. Note that $i \in \mathrm{supp}(\dot K)$ implies $i \in N$. So if $(i,s) \in \dot J$, then $i,s \in N$, which implies $(i,s) \in N$. Thus $\dot J \sub N$. Also $|I| < \k$ and $\card{\A_i} = \aleph_0 < \k$ for each $i$, which implies $|\dot J| < \k$. Because $N$ is $<\!\k$-closed, $\dot J \in N$.

It is clear from our construction that $\0 \forces$ ``$J \sub K$." For the other direction, suppose $t \in \cohen{\lambda}$ and $t \forces$ ``$i \in K$."
Let $t'$ be any extension of $t$.
Because $\dot K \sub N$, it is clear that $t' \forces$ ``$i \in K$" implies $t' \cap N \forces$ ``$i \in K$." 
By our choice of $\mathcal A_i$, this means $t' \cap N$ is compatible with some $s \in \mathcal A_i$; but $s \in N$, so $t'$ is also compatible with $s$.
Hence $t' \not\forces$ ``$i \notin J$."
Because this is true for every $t' \supseteq t$, this shows $t \forces$ ``$i \in J$."
Hence any condition forcing $i \in K$ also forces $i \in J$. It follows that $\0 \forces$ ``$K \sub J$" as claimed.
\end{proof}


If $t \supseteq r$ and, for some $i \in I \cap N$, $t \forces$ ``$i \in K_\g$ for infinitely many $\g \in T$", then $t \cap N \forces$ ``$i \in K_\g$ for infinitely many $\g \in T$."
To see this, note first that $t \forces$ ``$i \in K_\g$ for infinitely many $\g \in T$" 
just means that for any $t' \supseteq t$, there are infinitely many values of $\a$ such that there is some $t_\a$ compatible with $t'$ and $(t_\a,i) \in \dot K_{\g_\a}$. 
But because $\dot K_{\g_\a} \sub N$ for every $\a$ (which means that the $t_\a$'s in the previous sentence are always in $N$), this fact evidently does not change when we replace $t$ with $t \cap N$.

\begin{claim}
For each $\a < \k$, $q_\a \cup r \forces$ ``$\bigwedge K \,\rel\, \bigwedge K_{\g_\a}$."
\end{claim}
\begin{proof}[Proof of Claim]
Fix $\a < \k$.
Let $i,j \in I$, and let $s$ be any extension of $q_\a \cup r$ such that $s \forces$ ``$i \in K_{\g_\a}$ and $j \not\rel i$."
By a previous claim, $q_\a \cup s = s \forces$ ``for any $j' \in I$ with $j' \not\rel i$, there is some $i' \in I$ such that $j' \not\rel i'$ and $i' \in K_\g$ for $\k$-many $\g \in T$."
In particular, $s \forces$ ``there is some $i' \in I$ such that $j \not\rel i'$ and $i' \in K_\g$ for $\k$-many $\g \in T$."

Let $s'$ be any extension of $s$.
There is some $t \supseteq s'$ that decides the value of $i'$ in the previous paragraph: i.e., there is some particular $i' \in I$ such that 
$t \forces$ ``$i' \in K_\g$ for $\k$-many $\g \in T$."
Thus $i' \in I_\k$, and $t \forces$ ``$i' \in K_\g$ for infinitely many $\g \in T$." 
By the paragraph preceding this claim, $t \cap N \forces$ ``$i' \in K_\g$ for infinitely many $\g \in T$." 
Hence $(t \cap N,i') \in \dot K$.
In particular, $t \forces$ ``$i' \in K$."
But also $t \forces$ ``$j \not\rel i'$", and so $t \forces$ ``$j \not\rel \bigwedge K$."
Thus for any $s' \supseteq s$, some extension of $s'$ forces ``$j \not\rel \bigwedge K$."
It follows that $s \forces$ ``$j \not\rel \bigwedge K$."

But $s$ was an arbitrary extension of $q_\a \cup r$ having the property that, for some $i,j \in I$, $s \forces$ ``$i \in K_{\g_\a}$ and $j \not\rel i$."
Therefore $q_\a \cup r \forces$ ``if $i,j \in I$ and $i \in K_{\g_\a}$ and $j \not\rel i$, then $j \not\rel \bigwedge K$."
This implies $q_\a \cup r \forces$ ``$\bigwedge K \rel \bigwedge K_{\g_\a}$."
\end{proof}

In a generic extension $V[H]$ with $r \in H$, we have $\g \in T$ if and only if $q_\a \in H$ for some $\a < \k$ with $\g_\a = \g$, in which case $\dot J_\g = \dot K_{\g_\a}$ and (by the previous claim) $\bigwedge K \,\rel\, \bigwedge K_{\g_\a}$. Therefore
\begin{align}\tag{*}
r \forces \quad & \textstyle \bigwedge K \,\rel\, \bigwedge J_\g \text{ for all } \g \in T.
\end{align}

\begin{claim}
$r \forces$ ``$\dlt \,\rel\, \bigwedge K$."
\end{claim}
\begin{proof}[Proof of Claim]
We will prove separately that $r \forces$ ``$\dlt \rel \bigwedge \set{\bigwedge K_\g}{\g \in T}$" and that $r \forces$ ``$\bigwedge \set{\bigwedge K_\g}{\g \in T} \rel \bigwedge K$."

For the first of these assertions, note that $p \forces$ ``$\dlt \rel \bigwedge (\dot J_\g)_G$ for all $\g \in S$", that $r \supseteq p$, and that $r \forces$ ``$\dot J_\g = \dot K_\g$ for all $\g \in T$ and $T \sub S$." It follows that $r \forces$ ``$\dlt \rel \bigwedge K_\g$ for all $\g \in T$", and therefore $r \forces$ ``$\dlt \rel \bigwedge \set{\bigwedge K_\g}{\g \in T}$."

For the second assertion, first note that, by the definition of $\dot K$, if $i \in I$ then $r \forces$ ``if $i \in K$ then $i \in K_\g$ for infinitely many $\g \in T$."
Hence for every $i \in I$, $r \forces$ ``if $i \in K$ then $\bigwedge \set{K_\g}{\g \in T} \ \rel \ i$"; so $r \forces$ ``for all $i \in I$, if $i \in K$ then $\bigwedge \set{K_\g}{\g \in T} \ \rel \ i$."
Hence $r \forces$ ``$\bigwedge \set{\bigwedge K_\g}{\g \in T} \,\rel\, \bigwedge K$."
\end{proof}

From the last few claims, we see that there is a nice name $\dot J \in N$ for a subset of $I$ such that 
\begin{align*}
r \forces \quad & J = K \text{ and } \textstyle 0 \neq \dlt \,\rel\, \bigwedge K \rel \bigwedge J_\g \rel \g \text{ for all } \g \in T.
\end{align*}
So $r \forces$ ``if $\g \in T$, then $\dot J$ satisfies all the criteria in the definition of $\dot J_\g$." Consequently, $r \forces$ ``$\dot J_\g \sqsubseteq \dot J$ for all $\g \in T$." However, $\card{\set{x \in \mathrm{Lev}(\dot J)}{x \sqsubseteq \dot J}}<\k$, and $\dot J_\g \sqsubseteq \dot J$ implies $\mathrm{Lev}(\dot J_\g) \leq \mathrm{Lev}(\dot J)$. Therefore
$$r \forces \quad \mathrm{Lev}(\dot J_\g) < \mathrm{Lev}(\dot J) \text{ for all but} <\!\k\text{-many } \g \in T.$$
Also $r \forces$ ``$T \sub S \text{ and } |T| = \k$" and therefore
$$r \forces \quad \mathrm{Lev}(\dot J_\g) < \mathrm{Lev}(\dot J) \text{ for } \k\text{-many }\g \in S.$$ 
But $\dot J \in N \sub \bigcup_{\xi < \z}M_\xi$, which implies that $\mathrm{Lev}(\dot J) < \z$.
This contradicts our choice of $\z$ and $p$, because $p$ forces the minimality of $\z$, and $r \supseteq p$.
\end{proof}

\begin{corollary}\label{cor:main1}
$\axiom+\neg\ch$ is consistent relative to \zfc.
\end{corollary}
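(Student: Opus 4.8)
The plan is to read this off immediately from Theorem~\ref{thm:cohen} once a suitable ground model is in place; all the real work has already been done. Theorem~\ref{thm:cohen} tells us that any model satisfying the conclusion of Theorem~\ref{thm:Davies} yields a model of \axiom after adding any number of Cohen reals, so the only remaining task is to start from a model that has arbitrarily long $\k$-sage Davies trees and in which the Cohen forcing used can simultaneously be made large enough to violate \ch.

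First I would take $V$ to be a model of $\gch+\square$. Such a model is available relative to \zfc alone, since the constructible universe $L$ satisfies $\gch$ (by G\"odel) together with $\square_\k$ for every cardinal $\k$ (by Jensen). By Theorem~\ref{thm:Davies}, this $V$ satisfies the hypothesis required by Theorem~\ref{thm:cohen}, namely the existence of arbitrarily long $\k$-sage Davies trees for all regular $\k$.

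Next I would force with $\cohen{\aleph_2}$ over $V$ to obtain a generic extension $V[G]$. As $\cohen{\aleph_2}$ is ccc, it preserves cardinals, and the usual nice-name counting argument shows it forces $2^{\aleph_0} \geq \aleph_2$, so that \ch fails in $V[G]$. On the other hand, Theorem~\ref{thm:cohen} guarantees $V[G] \models \axiom$. Combining the two, $V[G]$ is a model of $\axiom+\neg\ch$, which is exactly the asserted consistency relative to \zfc.

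There is no genuine obstacle here: the corollary is just the conjunction of the two facts already established, and the only point to verify is that the Cohen forcing producing \axiom can be taken large enough ($\lambda = \aleph_2$) to push the continuum past $\aleph_1$, which it trivially can. If anything warrants a sentence of care, it is merely noting that $\gch+\square$ is itself consistent relative to \zfc, for which $L$ serves as the standard witness.
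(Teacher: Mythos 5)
Your proof is correct and is exactly the argument the paper intends: the corollary is an immediate consequence of Theorem~\ref{thm:cohen}, taking $L$ (which satisfies $\gch+\square$) as the ground model and forcing with $\cohen{\aleph_2}$ to kill \ch while Theorem~\ref{thm:cohen} delivers \axiom in the extension. The paper states this corollary without a separate proof, noting in the introduction that adding Cohen reals to a model of $\gch+\square$ produces a model of $\axiom+\neg\ch$, which is precisely your route.
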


The proof of Theorem~\ref{thm:cohen} uses a hypothesis stronger than \axiom in $V$ in order to show that \axiom holds in $V[G]$. This leaves open the question of whether such a strong hypothesis in the ground model is really necessary.
\begin{question}
Is \axiom preserved by Cohen forcing?
\end{question}

\section{\gch does not imply \axiom}

In this section we show that \gch does not imply \axiom. 
As mentioned in the introduction, large cardinals are necessary for constructing a model of $\gch+\neg\axiom$.
Another feature of our proof is that the poset $\PP$ for which we show $\axiom(\PP)$ fails has size $\aleph_{\w+1}$. 
This feature is also necessary, in the sense that no smaller poset can work in the presence of \gch.
While in certain models there are smaller posets where \axiom fails (\axiom can fail for a size-$\aleph_2$ poset \cite[Theorem 4.1]{BDMY}, although \axiom always holds for posets of size $\leq\!\aleph_1$ \cite[remark 2.9]{BDMY}), \gch implies that \axiom holds for all posets of size $\leq\!\aleph_\w$.

Consider the following statement:
\begin{itemize}
\item[$\ $] For every model $M$ for a countable language $\mathcal L$ that contains a unary predicate $A$, if $|M| = \k^+$ and $|A| = \k$ then there is an elementary submodel $M' \prec M$ such that $|M'| = \mu^+$ and $|M' \cap A| = \mu$.
\end{itemize}
This statement, abbreviated by writing $(\k^+,\k) \hspace{-.5mm} \twoheadrightarrow \hspace{-.5mm} (\mu^+,\mu)$, is an instance of \emph{Chang's conjecture}. In this section we will consider the case $\k = \aleph_\w$, $\mu = \aleph_0$. This particular instance of Chang's conjecture is known as \emph{Chang's conjecture for $\aleph_\w$} and is abbreviated \chang.

The usual Chang conjecture, which is the assertion $(\aleph_2,\aleph_1) \hspace{-.5mm} \twoheadrightarrow \hspace{-.5mm} (\aleph_1,\aleph_0)$, is equiconsistent with the existence of an $\w_1$-Erd\H{o}s cardinal. Chang's conjecture for $\aleph_\w$ requires even larger cardinals. \chang was first proved consistent relative to a hypothesis a little weaker than the existence of a $2$-huge cardinal in \cite{LMS}. Recently this was improved to a huge cardinal in \cite{EH}. The precise consistency strength of \chang is an open problem, but significant large cardinal strength is known to be needed. This is because \chang implies the failure of $\square_{\aleph_\w}$ (see \cite{SV}, in particular Fact 4.2 and the remarks after it), and the failure of $\square_{\aleph_\w}$ carries significant consistency strength (see \cite{CF}).

\begin{theorem}\label{thm:main2}
If \chang holds, then \axiom fails.
\end{theorem}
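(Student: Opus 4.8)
The plan is to prove the statement directly by exhibiting, under \chang, a single separative ccc poset $\PP$ for which $\axiom(\PP)$ fails. Since $\PP$ will be ccc, its Souslin number is $S(\PP) = \aleph_1$, so $\axiom(\PP)$ asserts the existence of a dense $\DD \sub \PP$ with $\card{\set{q \in \DD}{p \text{ extends } q}} \leq \aleph_0$ for every $p \in \PP$; the goal is to show that no such $\DD$ exists. The poset will have size $\aleph_{\omega+1}$, and \chang will be the tool that defeats every candidate dense set.

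First I would construct $\PP$ as a modified finite-support product of Hechler forcings. Recall that a Hechler condition on a single coordinate is a pair $(s,f)$ with $s \in \w^{<\w}$, $f \in \w^\w$, and $s \sub f$; two conditions with the same stem are compatible, so Hechler forcing is $\sigma$-centered with only countably many centering classes. The modification attaches to each condition a ``bounding promise'' — extra data, indexed by ordinals below $\aleph_\omega$, that records an upper bound for the dominating functions — and it is this promise component that both raises $\card{\PP}$ to $\aleph_{\omega+1}$ and encodes the obstruction to $\axiom$. I would then check three routine properties: (i) $\PP$ is $\sigma$-centered (conditions sharing a common finite stem-and-promise core are compatible), hence ccc and $S(\PP) = \aleph_1$; (ii) $\PP$ is separative; and (iii) $\card{\PP} = \aleph_{\omega+1}$. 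The set of coordinate indices, of size $\aleph_\omega$, will serve as the Chang predicate $A$.

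Next comes the Chang argument. Suppose toward a contradiction that $\DD$ witnesses $\axiom(\PP)$. Form a structure $\mathcal M$ for a countable language on $H_\theta$ (or on $\aleph_{\omega+1}$ after a standard transfer) coding $\in$, a well-order, $\PP$, $\DD$, and the bounding structure, with a unary predicate $A$ naming the $\aleph_\omega$-sized index set, so that $\card{\mathcal M} = \aleph_{\omega+1}$ and $\card{A} = \aleph_\omega$. Applying \chang yields $\mathcal M' \prec \mathcal M$ with $\card{\mathcal M'} = \aleph_1$ and $\card{\mathcal M' \cap A} = \aleph_0$. The crucial consequence is that $\mathcal M'$ contains $\aleph_1$-many elements of $\DD$, yet every condition in $\mathcal M'$ is supported on the single countable set $C = \mathcal M' \cap A$ of coordinates. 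On each coordinate in $C$ the stems range over the countable set $\w^{<\w}$, so after a $\Delta$-system and pigeonhole thinning I obtain an uncountable $\DD' \sub \DD \cap \mathcal M'$ whose members share a common stem pattern on $C$ and are pairwise compatible.

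The main obstacle — and the reason the modification and \chang are both needed — is to pass from pairwise compatibility to a single $p$ extending every $q \in \DD'$, which would contradict $\card{\set{q \in \DD}{p \text{ extends } q}} \leq \aleph_0$. For ordinary Hechler forcing this step fails, because the dominating functions $f_q$ appearing among the $q \in \DD'$ can be pointwise unbounded on a coordinate, so no single function dominates them all and no common lower bound exists. Here the narrowness $\card{\mathcal M' \cap A} = \aleph_0$ supplied by \chang is exactly what forces the bounding promises of the conditions in $\mathcal M'$ to be controlled by countably many ordinals below $\aleph_\omega$, and the modification is designed so that this control caps the dominating functions coordinatewise, permitting me to amalgamate $\DD'$ into one condition $p$. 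Establishing this amalgamation — verifying that the bounding promises genuinely bound the relevant functions and that the common stem pattern makes $p$ a legitimate condition extending all of $\DD'$ — is the technical heart of the proof, after which the contradiction with $\axiom(\PP)$, and hence the theorem, follows.
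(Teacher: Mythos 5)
Your outline follows the same broad strategy as the paper (a modified Hechler product of size $\aleph_{\omega+1}$, separative and ccc, defeated via \chang), but there is a genuine gap at exactly the point you defer as ``the technical heart'': the modification (your ``bounding promise'') is never actually defined, and the amalgamation step, as you have set it up, would fail. After applying \chang and thinning by a $\Delta$-system and a pigeonhole on stems, your $\DD'$ consists of $\aleph_1$ conditions whose Hechler parts share a common stem pattern on the countable coordinate set $C = M' \cap A$, but whose side functions in $\w^\w$ remain completely uncontrolled. A single condition extending every member of $\DD'$ would need, on each relevant coordinate, a function dominating all of their side functions \emph{everywhere}; but even countably many functions in $\w^\w$ need not be pointwise bounded (take $f_n(0)=n$), so no such dominating function need exist. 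Knowing that $\card{M' \cap A} = \aleph_0$ controls which \emph{coordinates} (ordinals below $\aleph_\omega$) appear, but says nothing about which \emph{reals} appear, so confining the promise indices to countably many ordinals cannot ``cap the dominating functions coordinatewise'' as you claim. This is not a removable technicality: it is the entire content of the theorem, and pairwise compatibility of $\DD'$ is of no help.

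The paper's construction is designed precisely so that no amalgamation is ever needed. Conditions are triples $(p,f,A)$ with $p$ in the finite-support product $\HH^{\aleph_\w}$, $f \in \w^\w$ a single global side function, and $A$ a countable set with $A \supseteq \mathrm{supp}(p)$; the extension relation is arranged so that $(p,f,B)$ extends $(p,f,A)$ whenever $B \supseteq A$ --- identical first two coordinates, larger third. The Chang structure then carries the predicate $H = \w_\w \cup (\HH^{\aleph_\w} \times \w^\w)$, i.e.\ the predicate contains the pairs $(p,f)$ themselves and not merely the index set. Since $M' \cap H$ is countable, the $(p,f)$-parts of the $\aleph_1$-many conditions in $\DD \cap M'$ fall into a countable set, so by pigeonhole uncountably many of them are \emph{literally identical} except in their third coordinate $A \sub B = \w_\w \cap M'$, and the single condition $(p,f,B)$ extends all of them trivially. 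Any repair of your argument would also need two things you omit: first, a proof that $\card{\DD} \geq \aleph_{\omega+1}$ together with a bijection $M \to M \cap \DD$ built into the Chang structure (the paper gets the cardinality bound because the third coordinates of $\DD$ are cofinal in $([\w_\w]^{\aleph_0},\sub)$, whose cofinality exceeds $\aleph_\omega$); without this you cannot conclude that $\DD \cap M'$ is uncountable. Second, your claim (i) that the poset is $\sigma$-centered is false whenever $\continuum < \aleph_\omega$, since a finite-support product of more than $\continuum$ nontrivial posets is never $\sigma$-centered; fortunately only the ccc is needed, and that does hold.
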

\begin{proof}
We will describe a separative ccc poset $\PP$, and then use the Chang conjecture \chang to prove that this poset violates \axiom. The members of $\PP$ have the form $(p,f,A)$, where 
\begin{itemize}
\item[$\circ$] $p \in \HH^{\aleph_\w}$, where $\HH^{\aleph_\w}$ denotes the finite-support product of $\aleph_\omega$ Hechler forcings. The product is indexed by the ordinal $\w_\w$.
\item[$\circ$] $f$ is a function $\w \to \w$, but not the constant function $n \mapsto 0$.
\item[$\circ$] $A$ is a countably infinite subset of $\w_\w$ and $A \supseteq \mathrm{supp}(p)$.
\end{itemize}
Given $(q,g,B),(p,f,A) \in \PP$, we say that $(q,g,B)$ extends $(p,f,A)$ whenever
\begin{itemize}
\item[$\circ$] $q$ extends $p$ in $\HH^{\aleph_\w}$,
\item[$\circ$] $g(n) \geq f(n)$ for all $n \in \w$,
\item[$\circ$] $B \supseteq A$,
\item[$\circ$] if $\a \in A \cap (\mathrm{supp}(q) \setminus \mathrm{supp}(p))$, then $q(\a)$ extends $\< \0,f \>$ in $\HH$.
\end{itemize}
Alternatively, one may think of $\PP$ as a sub-poset of the countable support product of $\aleph_\w$ Hechler forcings, consisting of those conditions $r$ with infinite support such that for all but finitely many coordinates of $\mathrm{supp}(r)$, the $r(\a)$'s are all required to have an empty working part and the same side condition. Under this interpretation, a condition $(p,f,A) \in \PP$ corresponds to the condition $r$ in $\HH^{\aleph_\w}_{\mathrm{ctbl}}$ having countable support $A$, and with $r(\a) = \< \0,f \>$ for all $\a \in A \setminus \mathrm{supp}(p)$.

We begin by verifying that $\PP$ is a separative ccc poset.

\begin{claim}
$\PP$ is separative. 
\end{claim}
\begin{proof}[Proof of claim]
Let $(q,g,B),(p,f,A) \in \PP$ and suppose that $(q,g,B)$ is not an extension of $(p,f,A)$. As there are four parts to the definition of ``extension" in $\PP$, this can mean one of four things.

If $q$ does not extend $p$ in $\HH^{\aleph_\w}$, then because $\HH^{\aleph_\w}$ is separative, there is some $r \in \HH^{\aleph_\w}$ that extends $q$ but is incompatible with $p$. By extending $r$ further if necessary, we may assume $r(\a)$ extends $\<\0,g\>$ for all $\a \in \mathrm{supp}(r)$, and thereby ensure that $(r,g,B)$ is an extension of $(q,g,B)$. Clearly $(r,g,B)$ is incompatible with $(p,f,A)$, because $r$ is incompatible with $p$. 

If $B \not\supseteq A$, then let $\a \in A \setminus B$. Let $h$ be any condition in $\HH$ incompatible with $\<\0,f\>$. (Note that some such $h$ exists because $f$ is not the constant function $n \mapsto 0$.) Let $q' = q \cup \{(\a,h)\}$ and $B' = B \cup \{\a\}$. Then $(q',g,B')$ is a condition extending $(q,g,B)$; but our choice of $\a$ and $h$ guarantees that $(q',g,B')$ is incompatible with $(p,f,A)$. 

If $B \supseteq A$ but $g(n) < f(n)$ for some $n \in \w$, then let $\a \in A \setminus (\mathrm{supp}(p) \cup \mathrm{supp}(q))$ and let $h$ be any condition in $\HH$ extending $\<\0,g\>$ but incompatible with $\<\0,f\>$ (e.g., $h = \<g \rest (n+1),g\>$). Let $q' = q \cup \{(\a,h)\}$. Then $(q',g,B)$ is a condition extending $(q,g,B)$, but it is incompatible with $(p,f,A)$. 

Finally, suppose there is some $\a \in A \cap (\mathrm{supp}(q) \setminus \mathrm{supp}(p))$ such that $q(\a)$ does not extend $\< \0,f \>$ in $\HH$. Then, because $\HH$ is separative, there is some $r \in \HH$ that extends $q(\a)$ but is incompatible with $\<\0,f\>$. Define $q' \in \HH^{\aleph_\w}$ to be identical to $q$, except that $q'(\a) = r$. Then $(q',g,B)$ extends $(q,g,B)$ and is incompatible with $(p,f,A)$. 
\end{proof}

\begin{claim}
$\PP$ has the ccc. 
\end{claim}
\begin{proof}[Proof of claim]
Suppose $\A$ is an uncountable collection of conditions in $\PP$. Let $\B = \set{p}{(p,f,A) \in \A \text{ for some } f \text{ and } A}$ denote the corresponding collection of conditions in $\HH^{\aleph_\w}$. Because $\HH^{\aleph_\w}$ has the ccc, some two conditions in $\B$ are compatible in $\HH^{\aleph_\w}$. But then the two corresponding conditions in $\A$ are also compatible: for if $(q,g,B),(p,f,A) \in \PP$ and $r$ is a common extension of $p$ and $q$ in $\HH^{\aleph_\w}$, then we may further extend $r$, if necessary, so that for each $\a \in \mathrm{supp}(r)$, $r(\a)$ is an extension of both $\<\0,f\>$ and $\<\0,g\>$. Then $(r,\max\{f,g\},A \cup B)$ is a common extension of $(q,g,B)$ and $(p,f,A)$ in $\PP$.
\end{proof}

It remains to show that \chang implies that for any dense $\DD \sub \PP$, there is some condition in $\PP$ that extends uncountably many members of $\DD$. 
Let $\DD$ be a dense sub-poset of $\PP$.

To begin, note that for each countable $A \sub \w_\w$, some member of $\DD$ extends a condition of the form $(p,f,A)$. This implies that \
$$\set{B \sub \w_\w}{(q,g,B) \in \DD \text{ for some } q \in \HH^{\aleph_\w} \text{ and } g \in \w^\w}$$ is cofinal in the poset $([\w_\w]^{\w},\sub)$. The cofinality of this poset is well-known to be $>\! \aleph_\w$. Hence $|\DD| \geq \aleph_{\w+1}$. 

Let $H = \w_\w \cup (\HH^{\aleph_\w} \times \w^\w)$, and note that $|H| = \aleph_\w$.

Let $(M,\in)$ be a model of (a sufficiently large fragment of) \zfc such that $H \sub M$, $\DD \in M$, and $|M| = |M \cap \DD| = \aleph_{\w+1}$. (Such a model can be obtained in the usual way, via the downward L\"{o}wenheim-Skolem Theorem.) Let $\phi: M \to M \cap \DD$ be a bijection, and consider the model $(M,\in,\phi,H)$ for the $3$-symbol language consisting of a binary relation, a unary function, and a unary predicate. Applying the Chang conjecture \chang, there exists some $M' \sub M$ such that $|M'| = \aleph_1$, $H' = M' \cap H$ is countable, and $(M',\in,\phi,H') \prec (M,\in,\phi,H)$.

Let $\DD' = \DD \cap M'$. By elementarity, the restriction of $\phi$ to $M'$ is a bijection $M' \to \DD'$, and so $|\DD'| = \aleph_1$.

Let $B = \w_\w \cap M'$. As $B \sub H'$, we have $|B| = \aleph_0$. Note that $(p,f,A) \in \DD'$ implies $A \in M'$, and therefore (because $A$ is countable, and $M'$ models (enough of) \zfc) $A \sub M'$. Therefore $(p,f,A) \in \DD'$ implies $A \sub B$. 

Furthermore, $(p,f,A) \in \DD'$ implies $(p,f) \in M'$, which implies $(p,f) \in H'$.
Therefore 
$$\set{(p,f)}{ (p,f,A) \in \DD' \text{ for some } A \sub B}$$
is countable. 
But $\DD'$ is uncountable, so by the pigeonhole principle, there is some pair $(p,f) \in \HH^{\aleph_\w} \times \w^\w$ such that $\set{ A \sub B }{ (p,f,A) \in \DD' }$ is uncountable.

Finally, note that $(p,f,B)$ is a condition in $\PP$, and that $(p,f,B)$ extends $(p,f,A)$ whenever $A \sub B$. Therefore $(p,f,B)$ extends uncountably many conditions in $\DD$. 
\end{proof}

\begin{corollary}
$\gch+\neg\axiom$ is consistent relative to a huge cardinal.
\end{corollary}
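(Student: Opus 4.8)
The plan is to combine Theorem~\ref{thm:main2} with the known joint consistency of \gch and Chang's conjecture for $\aleph_\w$. The substantive work has already been done: Theorem~\ref{thm:main2} establishes the purely combinatorial implication that \chang yields the failure of \axiom, so all that remains is to exhibit a single model in which both \gch and \chang hold.

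First I would invoke the consistency result quoted in the introduction: by \cite{EH}, the theory $\gch + $\chang is consistent relative to the existence of a huge cardinal. As recorded there, \chang was first obtained from a hypothesis slightly weaker than a $2$-huge cardinal in \cite{LMS}, and the large-cardinal hypothesis was subsequently reduced to a single huge cardinal in \cite{EH}; in either case the forcing can be arranged so that \gch continues to hold in the resulting model. Fix such a model $V$, so that $V$ is a model of $\gch + $\chang. Applying Theorem~\ref{thm:main2} inside $V$, the separative ccc poset $\PP$ constructed there witnesses the failure of \axiom, and hence $V$ satisfies $\neg\axiom$. Therefore $V$ is a model of $\gch + \neg\axiom$, and since $V$ arises from a ground model containing a huge cardinal, this shows $\gch + \neg\axiom$ is consistent relative to a huge cardinal.

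Since the two ingredients are simply quoted --- one from the preceding theorem, one from the literature --- the argument has no real obstacle. The single point meriting a moment's attention is confirming that the consistency proof of \chang in \cite{EH} can be arranged to preserve, or re-establish, \gch rather than delivering \chang in isolation; but the introduction already records the joint consistency of $\gch + $\chang relative to a huge cardinal, so this is settled. All the genuine content of the corollary thus resides in Theorem~\ref{thm:main2} and in the imported large-cardinal result.
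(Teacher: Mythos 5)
Your proposal is correct and follows exactly the route the paper intends: the corollary is an immediate consequence of Theorem~\ref{thm:main2} combined with the consistency of $\gch+{}$\chang relative to a huge cardinal from \cite{EH}, which is precisely what the introduction records. The one point you flag --- that the \cite{EH} model can be taken to satisfy \gch as well --- is indeed the only detail worth checking, and the paper treats it the same way, by citing the joint consistency directly.
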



\section{The measure algebra of weight $\aleph_\omega$}

In \cite[Section 4]{BDMY}, it is observed that \ma implies \axiom fails for the weight-$\aleph_0$ measure algebra. In fact, this was the first known example of a poset for which \axiom consistently fails. The results contained in this section and the previous one grew from trying to discover whether \chang implies \axiom fails for the weight-$\aleph_\w$ measure algebra. As mentioned in the previous section, \chang does not imply the failure of \axiom for any poset of size $\leq\!\aleph_\w$, so this makes the weight-$\aleph_\w$ measure algebra a natural place to look. We still do not know whether \chang implies the failure of \axiom for the weight-$\aleph_\w$ measure algebra. But we show below that $\gch\,+\,$\chang is consistent with the failure of \axiom for the weight-$\aleph_\w$ measure algebra.

Given some set $A$, $2^A$ denotes the set of all functions $A \to 2$. 
The product measure $\mu$ on $2^A$ is defined by setting 
$$\mu\!\left(\set{f \in 2^A}{f(\a) = 0}\right) = \mu\!\left(\set{f \in 2^A}{f(\a) = 1}\right) = \textstyle \frac{1}{2}$$
for all $\a \in A$. 
More precisely, this coordinate-wise assignment extends naturally to a pre-measure on the clopen subsets of $2^A$, and this extends, via Carath\'eodory's Theorem, to a countably additive measure on the smallest $\s$-algebra containing all the clopen subsets of $2^A$. We denote this $\s$-algebra by $\B_A$.

Now suppose $A = \k$ is an infinite cardinal number, and let $M_\k$ denote the quotient of $\B_\k$ by the ideal of sets having $\mu$-measure $0$. Then $M_\k$ is a $\s$-complete Boolean algebra, called \emph{the measure algebra of weight} $\k$.

Given $X \sub 2^\k$ and $A \sub \k$, we say that $X$ is \emph{supported} on $A$ if there is some $Y \sub 2^A$ such that $X = Y \times 2^{\k \setminus A}$.
It is easy to check that if $X \neq \0$ and $X$ is supported on every $A$ in some collection $\A \sub \mathcal P(\k)$, then $X$ is supported on $\bigcap \A$. Therefore there is a smallest $A \sub \k$ on which $X$ is supported, and we denote this set by $\mathrm{supp}(X)$.

\begin{lemma}\label{lem:essentiallycountable}
Every member of $\B_\k$ is supported on a countable subset of $\k$. In fact, $X \in \B_\k$ if and only if $X = Y \times 2^{\k \setminus A}$ for some countable $A \sub \k$ and some Borel $Y \sub 2^A$.
\end{lemma}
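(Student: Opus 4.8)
The plan is to prove both directions of the stated equivalence, with the forward direction (every member of $\B_\k$ is supported on a countable set) being the substantive content. The key observation is that $\B_\k$ is, by definition, the smallest $\s$-algebra containing the clopen subsets of $2^\k$, so the natural strategy is to show that the family
$$\C \,=\, \set{X \sub 2^\k}{X \text{ is supported on a countable subset of } \k}$$
is itself a $\s$-algebra containing all the clopen sets. Since each clopen (indeed each basic clopen) set depends on only finitely many coordinates, it is certainly supported on a finite, hence countable, subset of $\k$, so $\C$ contains the generators. Minimality of $\B_\k$ then forces $\B_\k \sub \C$, which is exactly the assertion that every member of $\B_\k$ is supported on a countable set.

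First I would check that $\C$ is closed under complementation: if $X = Y \times 2^{\k \setminus A}$ with $A$ countable, then $2^\k \setminus X = (2^A \setminus Y) \times 2^{\k \setminus A}$, so the complement is supported on the same countable $A$. Next I would verify closure under countable unions, which is where the countability of $\k$-many supports must be aggregated: given $\seq{X_n}{n \in \w}$ with each $X_n$ supported on a countable $A_n$, set $A = \bigcup_{n \in \w} A_n$. This $A$ is a countable union of countable sets, hence countable, and each $X_n$, being supported on $A_n \sub A$, is also supported on $A$ (a set supported on a smaller coordinate set is supported on any larger one). Writing $X_n = Y_n \times 2^{\k \setminus A}$ with $Y_n \sub 2^A$, we get $\bigcup_n X_n = \left(\bigcup_n Y_n\right) \times 2^{\k \setminus A}$, so the union lies in $\C$. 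Together with closure under complement and the fact that $2^\k = 2^\0 \times 2^\k \in \C$, this establishes that $\C$ is a $\s$-algebra.

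For the refined ``in fact'' clause, I would argue that once $X \in \B_\k$ is known to be supported on a countable $A$, the cross-section $Y \sub 2^A$ with $X = Y \times 2^{\k \setminus A}$ must itself be Borel in $2^A$. The cleanest way to see this is to note that the projection map $\pi_A : 2^\k \to 2^A$ (restriction of functions to coordinates in $A$) is continuous, and that on sets supported on $A$ the correspondence $Y \mapsto Y \times 2^{\k \setminus A}$ is a Boolean isomorphism between the Borel subsets of $2^A$ and the $A$-supported members of $\B_\k$; running the same $\s$-algebra argument inside $2^A$, or simply observing that $Y = \pi_A[X]$ is obtained from the $A$-supported Borel set $X$, shows $Y$ is Borel. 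Conversely, if $X = Y \times 2^{\k \setminus A}$ for countable $A$ and Borel $Y \sub 2^A$, then pulling $Y$ back through $\pi_A$ expresses $X$ as a member of the $\s$-algebra generated by clopen sets, so $X \in \B_\k$.

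The main obstacle I anticipate is not in any single computation but in handling the support notion cleanly: I must use the fact (already recorded in the paragraph preceding this lemma) that a nonempty set supported on every member of a family $\A$ is supported on $\bigcap \A$, so that $\mathrm{supp}(X)$ is well-defined, and I must be careful that ``supported on $A$'' is genuinely monotone in $A$, i.e.\ that $A \sub A'$ and support on $A$ imply support on $A'$. This monotonicity is what makes the countable-union step go through after replacing each $A_n$ by the common countable set $A$, and it is the one place where a slightly fussy verification is needed rather than a one-line observation.
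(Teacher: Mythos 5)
Your strategy coincides with the paper's in its essentials---exhibit an auxiliary $\s$-algebra containing the clopen sets and invoke minimality of $\B_\k$, then get the reverse inclusion by pulling Borel subsets of $2^A$ back to $2^\k$---but the two proofs decompose the work differently. The paper's auxiliary family is $\set{Y \times 2^{\k \setminus A}}{A \sub \k \text{ countable},\ Y \sub 2^A \text{ Borel}}$, i.e.\ the Borel condition is built in from the start, so the minimality step delivers the full ``in fact'' clause in one stroke (the price is that closure under countable unions now requires pulling each $Y_n$ back along the continuous projection $2^A \to 2^{A_n}$, where $A = \bigcup_n A_n$, which preserves Borelness). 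Your family $\C$ omits the Borel condition, so your minimality step only yields $X = Y \times 2^{\k\setminus A}$ with $Y$ an \emph{arbitrary} subset of $2^A$, and the ``in fact'' clause hinges entirely on your second-stage upgrade showing that this $Y$ is actually Borel.

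That upgrade is the one genuine gap. Your justification---``$Y = \pi_A[X]$ is obtained from the $A$-supported Borel set $X$, [which] shows $Y$ is Borel''---rests on the principle that continuous images of Borel sets are Borel, and that principle is false in general (such images are only analytic). The claim you need is nevertheless true, with a short proof that goes through preimages rather than images: fix any $z \in 2^{\k \setminus A}$ and let $i_z : 2^A \to 2^\k$ be the section map $y \mapsto y \cup z$; since $X = Y \times 2^{\k \setminus A}$, we have $Y = i_z^{-1}[X]$. The operation $i_z^{-1}$ sends clopen sets to clopen sets and commutes with complements and countable unions, so it carries $\B_\k$ into the $\s$-algebra generated by the clopen subsets of $2^A$, and since $A$ is countable this is the full Borel $\s$-algebra of $2^A$ (every open subset of $2^A$ is a countable union of basic clopen sets). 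The same clopen-by-clopen routing is needed to make your converse airtight: for uncountable $\k$, the clopen-generated algebra $\B_\k$ is strictly smaller than the $\s$-algebra generated by all open subsets of $2^\k$, so one cannot simply cite ``continuous preimages of Borel sets are Borel'' for $\pi_A$; instead observe, as the paper does, that $\set{Y \sub 2^A}{Y \times 2^{\k\setminus A} \in \B_\k}$ is a $\s$-algebra containing the clopen subsets of $2^A$, hence all Borel ones. With these repairs your argument is complete, and apart from where the Borel condition enters it is the same proof as the paper's.
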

\begin{proof}
Let $\B$ denote the set of all $X$ such that $X = Y \times 2^{\k \setminus A}$ for some countable $A \sub \k$ and some Borel $Y \sub 2^A$. It is clear that $\B$ is a $\s$-algebra containing all the basic clopen subsets of $2^\k$; hence $\B_\k \sub \B$. Conversely, if $A \sub \k$ is countable, then $\B_\k$ contains $C \times 2^{\k \setminus A}$ for every clopen $C \sub 2^A$, because $C \times 2^{\k \setminus A}$ is clopen in $2^\k$. It follows that $\B_\k$ must contain $Y \times 2^{\k \setminus A}$ for every Borel $Y \sub 2^A$. Hence $\B \sub \B_\k$.
\end{proof}

Let $\AA$ denote the amoeba forcing. Conditions in $\AA$ are open subsets of $2^\w$ with measure $<\!\frac{1}{2}$, and the extension relation on $\AA$ is $\sub$. Let $\AA^\w$ denote the finite support product of $\w$ copies of $\AA$.

\begin{lemma}\label{lem:amoeba}
Let $V$ be a model of \zfc and let $G$ be an $\AA^\w$-generic filter over $V$. In $V[G]$, there is a countable collection $\C$ of non-null closed subsets of $2^\w$ such that if $B$ is any non-null Borel subset of $2^\w$ whose Borel code is in $V$, then there is some $C \in \C$ such that $C \sub B$.
\end{lemma}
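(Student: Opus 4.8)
The plan is to realize $\AA^\w$ as a product indexed by $\w\times\w$, extract from each coordinate a generic closed set of measure exactly $\tfrac12$, and then transport these sets into small cylinders by measure-scaling homeomorphisms, so that they can be slipped inside positive Borel sets of arbitrarily small measure. First I set up the generic data: re-index so that $\AA^\w=\AA^{\w\times\w}$, and for each pair $(k,i)$ let $U_{k,i}=\bigcup\set{q}{q\in G\text{ at coordinate }(k,i)}$ be the $(k,i)$-th amoeba-generic open set and $F_{k,i}=2^\w\setminus U_{k,i}$ its closed complement. The robustness fact I record first is that $\1\forces \mu(U_{k,i})=\tfrac12$, hence $\mu(F_{k,i})=\tfrac12$: conditions have measure $<\tfrac12$, it is dense to enlarge a condition (by adjoining small balls) so that its measure exceeds $\tfrac12-\e$, and the measure of the increasing union $U_{k,i}$ is the supremum of the measures of its members. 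Crucially this holds below \emph{any} condition, even one pinning a fixed open set inside $U_{k,i}$, because a single amoeba coordinate can never push $\mu(U_{k,i})$ above $\tfrac12$. This is what makes the $F_{k,i}$ robustly non-null, in contrast to intersections of several coordinates, whose complements (a union of $\geq 2$ sets of measure $\tfrac12$) can be forced to have measure $1$.

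Next I fix, in $V$, an enumeration $\seq{[\sigma_k]}{k\in\w}$ of the basic clopen cylinders of $2^\w$ together with the canonical homeomorphisms $h_k\colon 2^\w\to[\sigma_k]$, $h_k(x)=\sigma_k\cat x$, which scale measure by the factor $2^{-|\sigma_k|}$. I then set $\C=\set{h_k(F_{k,i})}{k,i\in\w}$. Each $h_k(F_{k,i})$ is closed (a homeomorphic image of a closed set inside the clopen set $[\sigma_k]$) and non-null, with $\mu(h_k(F_{k,i}))=2^{-|\sigma_k|-1}$ by the robustness fact above, and $\C$ is manifestly countable.

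It remains to show $\C$ is coinitial from below among the positive $V$-Borel sets. Given a non-null Borel $B$ with code in $V$, the Lebesgue density theorem (computed in $V$ from the code) yields a cylinder $[\sigma_k]$ in which $B$ has relative measure $>\tfrac12$; equivalently $B_k:=h_k^{-1}(B)$ is a $V$-Borel set with $\mu(2^\w\setminus B_k)<\tfrac12$. By outer regularity (in $V$) choose an open $W\supseteq 2^\w\setminus B_k$ with $\mu(W)<\tfrac12$. The set $D_B$ of conditions that assign the open set $W$ to some coordinate $(k,i)$ outside their support is then dense in $\AA^\w$: any condition has finite support, so a fresh $(k,i)$ is available and $W$ is a legitimate amoeba condition. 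Any $p\in D_B$ with such a coordinate forces $U_{k,i}\supseteq W\supseteq 2^\w\setminus B_k$, hence $F_{k,i}\sub B_k$, hence $h_k(F_{k,i})\sub h_k(B_k)=B\cap[\sigma_k]\sub B$. Since $G$ meets every dense set coded in $V$, in $V[G]$ some member $h_k(F_{k,i})\in\C$ satisfies $h_k(F_{k,i})\sub B$, as required.

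The main obstacle is exactly this mismatch of scales: the amoeba-generic closed sets have the fixed, coarse measure $\tfrac12$ and cannot be slipped inside a positive Borel set of small measure, while shrinking them by intersecting several coordinates destroys the robust lower bound on their measure. The device that overcomes this is the pair consisting of the Lebesgue density theorem and the measure-scaling homeomorphisms $h_k$: density reduces the problem, inside a suitable cylinder, to the favorable case where $B$ has relative measure $>\tfrac12$, and the $h_k$ carry the single-coordinate closed sets --- whose measure stays pinned at $\tfrac12$ after rescaling --- down into that cylinder. The only remaining points to check are routine absoluteness facts: that the relative measures, the chosen cylinder, the open hull $W$, and the maps $h_k$ are all computed from the $V$-code of $B$, and that the countably many dense sets $D_B$, one per $V$-Borel code, are all met by the single generic filter $G$.
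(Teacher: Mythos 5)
Your proof is correct, but it takes a genuinely different route from the paper's. The paper keeps the index set $\w$ and puts into $\C$ the sets $2^\w \setminus (U_m \cup U_n)$ for \emph{pairs} of coordinates: given a non-null $V$-Borel set $B$, it covers $2^\w \setminus B$ by a $V$-open set $W$ of measure $<1$, splits $W$ into two open sets of measure $<\frac{1}{2}$, and uses genericity to place these two pieces at two coordinates; the price is that non-nullity of $2^\w \setminus (U_m \cup U_n)$ is not automatic and requires a separate density argument (it is dense to force $p(m) \cap p(n) \neq \0$, so $U_m \cap U_n$ is a nonempty open set, and inclusion--exclusion gives $\mu(2^\w \setminus (U_m \cup U_n)) = \mu(U_m \cap U_n) > 0$). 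You instead use \emph{single} coordinates, whose complements $F_{k,i}$ are robustly non-null exactly as you say (the directed union of opens of measure $<\frac{1}{2}$ has measure at most $\frac{1}{2}$ --- note the family is directed rather than an increasing sequence, but in a second countable space this still gives the supremum bound, and only the upper bound is needed), and you compensate for the fixed scale $\frac{1}{2}$ by invoking the Lebesgue density theorem in $V$ to find a cylinder $[\sigma_k]$ in which $B$ has relative measure $>\frac{1}{2}$, then transporting $F_{k,i}$ into that cylinder by the measure-scaling homeomorphism $h_k$. The trade-off: the paper's argument is more elementary and self-contained (no density theorem, no rescaling), at the cost of a genericity argument for positivity; yours makes positivity trivial and each member of $\C$ has an exactly computable measure, at the cost of the density theorem (or an equivalent inner/outer regularity argument) and the bookkeeping of reindexing $\AA^\w$ as $\AA^{\w\times\w}$. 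The absoluteness points you defer (inclusion of Borel codes is $\Pi^1_1$ hence preserved from $V$ to $V[G]$, and reinterpretation commutes with $h_k$ and with $h_k^{-1}$) are indeed routine, and the dense sets $D_B$ lie in $V$ since $W$ and $k$ do, so the single generic $G$ meets them all; the paper handles the same issue with its tilde notation for reinterpreted codes.
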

\begin{proof}
Each $p \in G$ is a sequence of open subsets of $2^\w$ in $V$, all but finitely many of which are $\0$. For each $p \in G$ and $n \in \w$, let $\widetilde{p}(n)$ denote the reinterpretation of $p(n)$ in $V[G]$; i.e., $\widetilde{p}(n)$ is the $V[G]$-interpretation of the Borel code of $p(n)$ in $V$.
For each $n \in \w$, define $U_n = \bigcup_{p \in G}\widetilde{p}(n)$, and let
$\C = \set{2^\w \setminus (U_n \cup U_m)}{m,n \in \w}.$

It is straightforward to show that each $U_n$ is an open set with measure $\frac{1}{2}$. Fix $m,n \in \w$. The set of all $p \in \AA^\w$ with $p(m) \cap p(n) \neq \0$ is dense. Therefore $U_m \cap U_n \neq \0$, and because both these sets are open, $\mu(U_m \cap U_n) > 0$. Hence 
\begin{align*}
\mu(2^\w \setminus (U_m \cup U_n)) & = 1 - \mu(U_m \cup U_n) \\
& = 1 - (\mu(U_m) + \mu(U_n) - \mu(U_m \cap U_n)) \\
& = \mu(U_m \cap U_n) > 0.
\end{align*}
Thus $\C$ is a countable collection of non-null closed subsets of $2^\w$.

Let $B$ be a non-null Borel set in $V$. Then $\mu(2^\w \setminus B) < 1$, and this implies there is an open $W \sub 2^\w$ such that $\mu(W) < 1$ and $2^\w \setminus B \sub W$. Any open set of measure $<\!1$ can be split into two open sets of measure $<\!\frac{1}{2}$, so in particular there are open $V_1,V_2 \sub 2^\w$ such that $\mu(V_1) < \frac{1}{2}$, $\mu(V_2) < \frac{1}{2}$, and $V_1 \cup V_2 = W$. 
Now the set of all $p \in \AA^\w$ with $p(m) = V_1$ and $p_n = V_2$ for some $m,n \in \w$ is dense. Therefore there exist some $m,n \in \w$ and some $p \in G$ such that $p(m) = V_1$ and $p(n) = V_2$. 
Letting $\widetilde B$, $\widetilde{V}_1$, and $\widetilde{V}_2$ denote the $V[G]$-interpretations of the Borel codes for $B$, $V_1$, and $V_2$, respectively, we have $2^\w \setminus \widetilde B \sub \widetilde{V}_1 \cup \widetilde{V}_2 \sub \widetilde{p}(m) \cup \widetilde{p}(n)$. Hence $\widetilde B \supseteq 2^\w \setminus (U_n \cup U_m) \in \C$.
\end{proof}

\begin{theorem}
It is consistent, relative to a huge cardinal, that \gch holds and that \axiom fails for $M_{\aleph_\w}$.
\end{theorem}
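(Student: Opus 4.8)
The goal is to produce a model where \gch holds but \axiom fails for the specific poset $M_{\aleph_\w}$, the measure algebra of weight $\aleph_\w$. The strategy, mirroring Section 3, is to combine a large-cardinal hypothesis (a huge cardinal, to get \chang together with \gch via \cite{EH}) with an explicit forcing construction. The plan is to start in a model $V$ of $\gch+{}$\chang, and then force with a measure-algebra-flavored analogue $\AA^\w$-style product that adds, at each of the $\aleph_\w$-many coordinates, enough generic structure so that the quotient algebra $M_{\aleph_\w}$ inherits a failure of \axiom. The role of Lemma~\ref{lem:amoeba} is the engine: in an amoeba extension one obtains a \emph{countable} family $\C$ of non-null closed sets such that every ground-model non-null Borel set contains a member of $\C$. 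This is the measure-theoretic substitute for density that will let me mimic the Hechler-product argument of Theorem~\ref{thm:main2}.

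The key steps, in order, would be as follows. First, I would set up a forcing $\PP$ over $V$ that is (a modified finite/countable-support product over $\w_\w$ of) amoeba forcing, analogous to the modified Hechler product $\PP$ of Theorem~\ref{thm:main2}, chosen so that $\PP$ is ccc and preserves \gch and \chang into the extension $W=V[G]$. Second, I would verify that in $W$ the weight-$\aleph_\w$ measure algebra $M_{\aleph_\w}$ is computed correctly; by Lemma~\ref{lem:essentiallycountable} every element of $\B_{\aleph_\w}$ is supported on a countable subset of $\aleph_\w$, so a condition in $M_{\aleph_\w}$ can be coded by a pair $(B,A)$ with $A\in[\aleph_\w]^{\w}$ and $B$ a Borel set supported on $A$ — exactly the $(p,f,A)$-style bookkeeping used before. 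Third, given a dense $\DD\sub M_{\aleph_\w}$, I would run the \chang argument verbatim: build a model $(M,\in)$ of a large fragment of \zfc with $|M|=|M\cap\DD|=\aleph_{\w+1}$, apply \chang to obtain an elementary submodel $M'\prec M$ with $|M'|=\aleph_1$ and $H'=M'\cap H$ countable, put $B=\aleph_\w\cap M'$, and conclude by a pigeonhole argument that some single support set $B$ captures uncountably many members of $\DD'=\DD\cap M'$. The final step is to produce one condition in $M_{\aleph_\w}$ supported on $B$ that extends (i.e.\ refines below) uncountably many of these dense elements. This is where Lemma~\ref{lem:amoeba} enters: because the generic adds a countable family $\C$ of non-null closed sets cofinal (under $\supseteq$) among ground-model-coded non-null Borels supported on $B$, the uncountably many $\DD'$-elements supported on $B$ must repeat one of countably many ``covering'' sets from $\C$, producing a single non-null element below uncountably many of them.

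I expect the main obstacle to be the last step: constructing a single nonzero element of $M_{\aleph_\w}$ that lies below uncountably many members of $\DD'$. In the Hechler case of Theorem~\ref{thm:main2} this was immediate because the condition $(p,f,B)$ automatically extends every $(p,f,A)$ with $A\sub B$ — the combinatorics were \emph{finitary}. For the measure algebra, ``extension'' is the genuine Boolean order $\bigwedge$, and a countable (let alone uncountable) infimum of non-null sets can easily be null; so I cannot simply take the infimum of the uncountably many candidates. The amoeba lemma is precisely designed to dodge this: within $W$, the family $\C$ is countable and every relevant non-null Borel set (coded in $V$, hence supported on the countable $B\in M'$) contains some member of $\C$. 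So after the \chang reduction I would intersect with $\C$: uncountably many $\DD'$-conditions, each a non-null set supported on $B$, each contain some $C\in\C$; by pigeonhole a \emph{single} $C\in\C$ sits below uncountably many of them, and $C$ (reinterpreted in $M_{\aleph_\w}$) is the desired nonzero lower bound. The delicate points to get right are (i) ensuring the $\DD'$-elements really are coded in the ground model $V$ over which $\C$ is generic — this is why the product forcing must be arranged coordinatewise so that conditions supported on $B$ live over an amoeba-extension of the relevant countable piece — and (ii) checking that the chosen $\PP$ genuinely preserves both \gch and the instance \chang needed for the cardinal arithmetic of $M'$. These preservation checks, rather than the core pigeonhole, are where the real work lies.
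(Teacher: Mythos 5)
Your overall skeleton --- start from $\gch+{}$\chang (consistent from a huge cardinal by \cite{EH}), force so that Lemma~\ref{lem:amoeba} applies, then run the Chang-plus-pigeonhole argument with the countable family $\C$ --- is the paper's skeleton, and you correctly identified the amoeba family as the measure-theoretic substitute for the finitary Hechler combinatorics. But your forcing is wrong in a way that no preservation check can repair: a (finite- or countable-support, modified or not) product of amoeba forcing over $\w_\w$-many coordinates adds $\aleph_\w$ mutually generic reals, so in the extension $2^{\aleph_0}\geq\aleph_{\w+1}$ and \ch --- hence \gch --- fails outright. The theorem needs \gch to \emph{hold} in the final model, so a product of that length is disqualified from the start. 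What the paper forces with instead is the length-$\w_1$ \emph{finite-support iteration} of $\AA^\w$, where $\AA^\w$ is a product of only countably many amoebas; under \ch this iteration is ccc of size $\aleph_1$, so it preserves \gch, and being ccc it preserves \chang.

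The second gap is the order of your argument, which is exactly your ``delicate point (i)'' and is not fixable by arranging the product coordinatewise. Lemma~\ref{lem:amoeba} only captures Borel sets whose codes lie in the model over which the amoebas are generic, and a dense subset $\DD$ of $M_{\aleph_\w}$ in the extension will in general consist of sets with \emph{new} codes, not coded in $V$. A product faces a genuine dilemma here: a product long enough to guarantee spare coordinates mutually generic over the codes of the $\aleph_1$-many elements surviving your Chang reduction is long enough to kill \ch, while a product of length $\aleph_1$ (which would preserve \ch) can have all of its coordinates exhausted by those codes. The iteration resolves this, but only with the paper's order of steps, which is the reverse of yours: since the iteration is ccc of length $\w_1$, every code appears in some intermediate model $V[G_\a]$; since $\card{\EE}=\aleph_{\w+1}$ is regular and greater than $\aleph_1$, a \emph{single} stage $\a<\w_1$ already catches $\aleph_{\w+1}$-many members of $\EE$; one then applies \chang to that subfamily $\EE_\a$, and the stage-$\a$ amoebas (the family $\C$ produced in $V[G_{\a+1}]$, reinterpreted in $V[G]$) capture its members. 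If you Chang-reduce first, as you propose, the $\aleph_1$-many surviving elements may have codes appearing cofinally in $\w_1$, with each stage catching only countably many of them, so no single amoeba family captures uncountably many and the final pigeonhole never gets off the ground.
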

\begin{proof}
Let $V$ be a model of \gch plus \chang. Recall that the existence of such a model is consistent relative to a huge cardinal.

Let $\AA$ denote the amoeba forcing, and let $\PP$ denote the length-$\w_1$, finite support iteration of $\AA^\w$. Let $G$ be a $V$-generic filter on $\PP$. We claim that $V[G]$ is the desired model of \gch where $\axiom(M_{\aleph_\w})$ fails.

A standard argument shows $V[G] \models \gch$. 
Therefore, to prove the theorem we must show that $\axiom(M_{\aleph_\w})$ fails in $V[G]$. Because $M_{\aleph_\w}$ has the ccc, this amounts to showing that for any dense sub-poset $\DD$ of $M_{\aleph_\w} \setminus \{\mathbf 0\}$, some member of $M_{\aleph_\w} \setminus \{\mathbf 0\}$ has uncountably many members of $\DD$ above it.

We obserbve that $\PP$ has the ccc, and it is know that \chang is preserved by ccc forcing. (This fact is considered folklore, but a proof can be found in \cite[Lemma 13]{EH}.) Hence $V[G] \models$ \chang.

It follows from Lemma~\ref{lem:essentiallycountable} that every $X \in \B_{\w_\w}$ can be represented in a canonical fashion by a pair $(A,a)$, where $A = \mathrm{supp}(X)$ is countable, and $a$ is some canonical code for the Borel subset $Y$ of $2^A$ such that $X = Y \times 2^{\w_\w \setminus A}$.
Let us call the pair $(A,a)$ the \emph{code} for $X$.

For each $\a < \w_1$, let $G_\a$ denote (as usual) the restriction of $G$ to the first $\a$ coordinates of $\PP$. 
For each $\a < \w_1$, let $B^\a_{\w_\w}$ denote the set of all those members of $B_{\w_\w}$ whose code is in $V[G_\a]$.
For every $X \in \B_{\w_\w}$, the code for $X$ consists of a countable set of ordinals and a countable sequence of integers. This implies there is some $\a < \w_1$ such that the code for $X$ is a member of $V[G_\a]$.
Hence $B_{\w_\w} = \bigcup_{\a < \w_1}B^\a_{\w_\w}$.

Working in $V[G]$, let $\DD$ be a dense sub-poset of $M_{\aleph_\w}$. In what follows, it is easier to work with members of $\B_{\w_\w}$ rather than with their equivalence classes in $M_{\aleph_\w}$. For each $Z \in \DD$, fix some $X_Z \in \B_{\w_\w}$ representing $Z$. Let $\EE = \set{X_Z}{Z \in \DD}$, and observe that $|\EE| = |\DD|$. Because every dense sub-poset of $M_{\aleph_\w}$ has cardinality $>\!\aleph_\w$ (see e.g. \cite[Theorem 6.13]{Fremlin}), $|\EE| > \aleph_\w$. Also $|\EE| \leq 2^{\aleph_\w} = \aleph_{\w+1}$, and therefore $|\EE| = \aleph_{\w+1}$.

Because $B_{\w_\w} = \bigcup_{\a < \w_1}B^\a_{\w_\w}$ and $\card{\EE} = \aleph_{\w+1}$, there is some $\a < \w_1$ such that $\card{\EE \cap B^\a_{\w_\w}} = \aleph_{\w+1}$. Fix some such $\a$, and let $\EE_\a = \EE \cap B^\a_{\w_\w}$.

Let $(M,\in)$ be a model of (a sufficiently large fragment of) \zfc such that $\w_\w \sub M$, $\EE_\a \in M$, and $|M| = |M \cap \EE_\a| = \aleph_{\w+1}$. (Such a model can be obtained in the usual way, via the downward L\"{o}wenheim-Skolem Theorem.) Let $\phi: M \to M \cap \EE_\a$ be a bijection, and consider the model $(M,\in,\phi,\w_\w)$ for the $3$-symbol language consisting of a binary relation, a unary function, and a unary predicate. Applying the Chang conjecture \chang, there exists some $M' \sub M$ such that $|M'| = \aleph_1$, $M' \cap \w_\w$ is countable, and $(M',\in,\phi,\w_\w) \prec (M,\in,\phi,\w_\w)$.

Let $\EE_\a' = \EE_\a \cap M'$. By elementarity, the restriction of $\phi$ to $M'$ is a bijection $M' \to \EE_\a'$, and so $|\EE_\a'| = \aleph_1$.

Let $A = \w_\w \cap M'$. If $X \in \EE_\a'$, then $\mathrm{supp}(X) \in M'$, and therefore (because $\mathrm{supp}(X)$ is countable, and $M'$ models (enough of) \zfc) $\mathrm{supp}(X) \sub M'$. Hence $X \in \EE_\a'$ implies $\mathrm{supp}(X) \sub A$.

By Lemma~\ref{lem:amoeba}, in $V[G]$ there is a countable collection $\C$ of non-null closed subsets of $2^A$ such that if $B$ is any non-null Borel subset of $2^A$ whose Borel code is in $V[G_\a]$, then there is some $C \in \C$ such that $C \sub B$. (Strictly speaking, our lemma gives us such a family in $V[G_{\a+1}]$. But by reinterpreting the Borel codes of the members of that family in $V[G]$, we obtained the desired collection $\C$.) In particular, every $X \in \EE_\a'$ contains $C \times 2^{\w_\w \setminus A}$ for some $C \in \C$. By the pigeonhole principle, there is some particular $C \in \C$ such that $X \supseteq C \times 2^{\w_\w \setminus A}$ for uncountably many $X \in \EE_\a'$.

Moving from representatives back to equivalence classes, $[C \times 2^{\w_\w \setminus A}] \neq [\0]$ because $C$ is non-null in $2^A$, and $[C \times 2^{\w_\w \setminus A}] \leq [X]$ for uncountably many $X \in \EE_\a'$. Hence $[C \times 2^{\w_\w \setminus A}] \in M_{\aleph_\w} \setminus \{\mathbf 0\}$ and $[C \times 2^{\w_\w \setminus A}]$ extends uncountably many members of $\DD$. Because $\DD$ was an arbitrary dense sub-poset of $M_{\aleph_\w}$, and because $M_{\aleph_\w}$ has the ccc, we conclude that $\axiom(M_{\aleph_\w})$ fails.
\end{proof}



\begin{thebibliography}{99}

\bibitem{BDMY} W. Brian, A. Dow, D. Milovich, and L. Yengulalp, ``Telg\'arsky's conjecture may fail," to appear in \emph{Israel Journal of Mathematics}; preprint available at \texttt{https://arxiv.org/abs/1912.03327}.
\bibitem{CF} J. Cummings and S. D. Friedman, ``$\square$ on the singular cardinals,'' \emph{Journal of Symbolic Logic} \textbf{73} (2008), pp. 1307--1314.
\bibitem{Davies} R. O. Davies, ``Covering the plane with denumerably many curves," \emph{Journal of the London Mathematical Society} \textbf{38} (1963), pp. 433--438.
\bibitem{ET} P. Erd\H{o}s and A. Tarski, ``On families of mutually exclusive sets," \emph{Annals of Mathematics (2)} \textbf{44} (1943), pp. 315--329.
\bibitem{EH} M. Eskew and Y. Haiyut, ``On the consistency of local and global versions of Chang's Conjecture,'' \emph{Transactions of the American Mathematical Society} \textbf{370} (2018), pp. 2879--2905.
\bibitem{Foreman&Magidor} M. Foreman and M. Magidor, ``A very weak square principle," \emph{Journal of Symbolic Logic} \textbf{62} (1997), pp. 175--196.
\bibitem{Fremlin} D. H. Fremlin, ``Measure algebras," in \emph{Handbook of Boolean Algebras}, vol. 3, North Holland, Amsterdam (1989), pp. 877--980.
\bibitem{LMS} J. P. Levinski, M. Magidor, and S. Shelah, ``Chang's conjecture for $\aleph_\w$,'' \emph{Israel Journal of Mathematics} \textbf{69} (1990), pp. 161--172.
\bibitem{SV} A. Sharon and M. Vialle, ``Some consequences of reflection on the approachability ideal," \emph{Transactions of the American Mathematical Society} \textbf{362}, (2009), pp. 4201--4212.
\bibitem{Soukups} D. Soukup and L. Soukup, ``Infinite combinatorics plain and simple," \emph{Journal of Symbolic Logic} \textbf{83}(3) (2018), pp. 1247--1281.

\end{thebibliography}
\end{document}